          \def\version{4 August 2024}	 %
\numberwithin{equation}{section}  
\newcommand{\gk}[1]{\left\{#1\right\}}
\newcommand{\ek}[1]{\left[#1\right]}
\newcommand{\rk}[1]{\left(#1\right)}
\newcommand{\abs}[1]{\left| #1 \right|}
\newcommand{\Ccal}   {{\mathcal C }}
\newcommand{\Hcal}   {{\mathcal H }}
\newcommand{\Ocal}   {{\mathcal O }}
\newcommand{\Vcal}   {{\mathcal V }}
\newcommand{\BB}{\xi} 
\newcommand{\R}     {\mathbb{R}} 
\newcommand{\Z}     {\mathbb{Z}} 
\newcommand{\N}     {\mathbb{N}} 
\renewcommand{\P}   {\mathbb{P}} 
\newcommand{\E}     {\mathbb{E}}
 \newcommand{\ex}{{\rm e}} 
 \newcommand{\eps}{{\varepsilon}} 
 \renewcommand{\d}{{\rm d}}
\renewcommand{\l}{\lambda}
\newcommand{\e}{\varepsilon}
\newcommand{\1}{\mathbbm{1}}
\newcommand{\QV}[1]{\textcolor{red}{QV: #1}}
\newcommand{\WK}[1]{\textcolor{blue}{WK: #1}}
\renewcommand{\P}{\mathbb{P}}
\newcommand{\Pfrak}{\mathfrak{P}}
\newcommand{\Nrd}{{\mathfrak N}}
\newtheorem{theorem}{Theorem}[section]
\newtheorem{proposition}[theorem]{Proposition}
\newtheorem{lemma}[theorem]{Lemma}
\newtheorem{corollary}[theorem]{Corollary}
\newtheoremstyle{rem}{1.5ex}{1.5ex}{\rmfamily}{} {\bfseries\rmfamily}{} {1.5ex}{}
\theoremstyle{rem}
\newtheorem{remark}[theorem]{{\bfseries Remark}}
\newcommand{\Ns}{{\mathfrak N}^{\ssup{\mathrm{short}}}}
\newcommand{\Nl}{{\mathfrak N}^{\ssup{\mathrm{long}}}}
\newcommand{\norm}[1]{\left|\hspace{-0.395mm}\left| #1\right|\hspace{-0.395mm}\right|}
\newcommand{\Ett}{{\tt E}}
\newcommand{\Ptt}{{\tt P}}
\newcommand{\oldgamma}{\mathfrak W}
\newcommand{\smfrac}[2]{\textstyle{\frac {#1}{#2}}}
\newcommand{\ssup}[1] {{\scriptscriptstyle{({#1}})}}
\newcommand{\Wj}{\mathcal{W}}
\newcommand{\Var}{\mathtt{Var}}
\title[ODLRO in a mean-field Bose gas via the Feynman--Kac formula]{Proof of off-diagonal long-range order\\\medskip in a mean-field trapped Bose gas\\\medskip via the Feynman--Kac formula}
\author[\qquad \hfill Bai, K\"onig, Vogel]{Tianyi Bai$^{1}$, Wolfgang K\"onig$^{2,3}$, and Quirin Vogel$^{4}$}
\begin{document}

\maketitle

\centerline{\version}

\bigskip

\centerline{\textit{$^1$Chinese Academy of Sciences, Beijing, China}}
\centerline{\textit{$^2$Weierstra\ss-Institut f\"ur Angewandte Analysis und Stochastik, Berlin, Germany}}
\centerline{\textit{$^3$Institut f\"ur Mathematik, Technische Universit\"at Berlin, Berlin, Germany}}
\centerline{\textit{$^4$Ludwig-Maximilians-Universität München,
Mathematisches Institut, München, Germany}}

\begin{abstract}
We consider the non-interacting Bose gas  of $N$ bosons in dimension $d\geq 3$ in a trap in a mean-field setting with a vanishing factor $a_N$ in front of the kinetic energy. The choice $a_N=N^{-2/d}$ is the semi-classical setting and was analysed in great detail in a special, interacting case in \cite{DS21}. Using a version of the well-known Feynman--Kac representation and a further representation in terms of a Poisson point process, we derive precise asymptotics for the reduced one-particle density matrix, implying off-diagonal long-range order (ODLRO, a well-known criterion for Bose--Einstein condensation) for $a_N$ above a certain threshold and non-occurrence of ODLRO for $a_N$ below that threshold. In particular, we relate the condensate and its total mass to the amount of particles in long loops in the Feynman--Kac formula, the order parameter that Feynman suggested in \cite{F53}. For $a_N\ll N^{-2/d}$, we prove that all loops have the minimal length one, and  for $a_N\gg N^{-2/d}$ we prove 100 percent condensation and identify the distribution of the long-loop lengths as the Poisson--Dirichlet distribution.
\end{abstract}

\bigskip\noindent 
{\it MSC 2020.} 60F10; 60J65; 60K35; 82B10; 81S40; 82B21; 82B31; 82B26

\medskip\noindent
{\it Keywords and phrases.} Free Bose gas, Bose--Einstein condensation, Brownian bridges, symmetrised distribution, mean field, semi-classical regime, one-parameter reduced density matrix, off-diagonal long-range order, Poisson point process.

\setcounter{section}{0} 

\section{Introduction and main results}\label{sec-Intro}

\noindent This work is a contribution to the condensation theory of the Bose gas. Our main objectives are the following.
\begin{itemize}
\item Derive new and physically relevant results on Bose condensation for a particular mean-field version,

\item rigorously give evidence for the strong relation between the condensate and the long loops in the famous Feynman--Kac representation of the gas,

\item provide new, probabilistic proofs and use the language and toolbox of probability, in order to attract also this community to this fascinating subject.
\end{itemize} 

Since the vague suggestion of Feynman \cite{F53} that the number of particles in long loops might be a relevant order parameter for describing the famous phenomenon of Bose--Einstein condensation, the Bose gas became popular also in the probability world as a mathematically interesting object to study. However, there are not many probabilistic investigations yet with real physical relevance, but the tendencies often went to creations of new probabilistic models and new questions. Here we concentrate on physically relevant questions, yet establishing and further pushing a probabilistic toolbox.

For the study of the condensation phase transition in the Bose gas, the most acknowledged, crucial object to study is the {\em reduced one-particle density matrix}, and the most important goal here is to prove that it shows {\em off-diagonal long-range order (ODLRO)}, which is generally acknowledged as a signal of {\em Bose--Einstein condensation (BEC)}. This is what we are going to do in this work for a particular version of the Bose gas. 

In our recent work \cite{KVZ23}, we did this for the standard free (i.e., non-interacting) Bose gas in the thermodynamic regime. The precise model that we consider here is a mean-field model in a fixed trap with a vanishing factor $a_N$ in front of the kinetic energy. For the  particular value $a_N=N^{-2/d}$, we are in the semi-classical setting, and this model is particularly interesting since it shows a condensation phase transition at a fixed positive temperature. This has been shown in \cite{DS21} in a special case and is under work in \cite{BK27} in more generality (but, however, without proof of ODLRO). The present paper shows the existence and absence of ODLRO for many other choices of $a_N$. Furthermore, we also give a description of the condensate as the total mass of particles in long loops in the Feynman--Kac formula, and an explicit identification of the limiting distribution of the lengths of the long loops.

\subsection{A mean-field Bose gas}\label{sec-Bosegas}
\noindent We consider a canonical bosonic non-interacting system of $N$ particles in a confining potential in $\R^d$. The corresponding Hamilton operator is given as
\begin{equation}\label{Hdef}
\Hcal_{a, w}^{\ssup N}=-a \sum_{i=1}^N \Delta_i+\sum_{i=1}^Nw(x_i),\qquad x_1,\dots,x_N\in\R^d,
\end{equation}
the $N$-particle operator with kinetic energy given by $a\in(0,\infty)$ times the standard Laplace operator in a confining (or trapping) potential $w\colon\R^d\to[0,\infty)$. The quantity $1/a$ is interpreted as the mass of the particles. We are under the usual assumptions that $w$ is bounded from below and, for simplicity, is continuous and explodes quickly enough to $\infty$ far out. Our precise assumptions are formulated at the beginning of Section~\ref{sec-results}.


We are interested in {\em bosons} and introduce a symmetrisation, i.e., we project on the set of symmetric, i.e., permutation invariant, wave functions. Furthermore, we consider the particle system at positive temperature $1/\beta\in(0,\infty)$. That is,  we consider the following trace:
\begin{equation}\label{Zdef}
Z_N(\beta,a,w)={\rm Tr}_{+}\big(\ex^{-\beta\Hcal_{a,w}^{\ssup N}}\big)=Z_N(\beta a,1,\smfrac 1aw),
\end{equation}
where the index $+$ denotes the symmetrisation, i.e., the application of the projection operator on the set of all permutation invariant functions. The quantity $Z_N(\beta,a,w)$ is called the {\em partition function} of the system. In this paper, we study a {\em mean-field regime}, where we  do not introduce any $N$-dependence in $w$. Instead, we pick the parameter $a=a_N$ depending on $N$. Indeed, we will assume that $(a_N)_{N\in\N}$ is bounded, and \begin{equation}\label{chidef}
\chi=\lim_{N\to\infty} N a_N^{d/2}\in[0,\infty]
\text{ exists.}
\end{equation}

We  will investigate the {\it limiting free energy,}
\begin{equation}\label{freeenergyMF}
f_{\rm MF}(\beta,\chi)=-\frac 1\beta\lim_{N\to\infty}\frac 1N\log Z_N(\beta, a_N,w),
\end{equation}
and the {\it one-particle reduced density matrix} $\gamma^{\ssup a}_N\colon \R^d\times\R^d\to[0,\infty)$  of the state 
$$
\Gamma^{\ssup a}_N=\frac 1 {Z_N(\beta,a_N,w)}\ex^{-\beta \Hcal^{\ssup N}_{a,w}},
$$ 
that is, 
\begin{equation}\label{gammaNdef}
\gamma^{\ssup a}_N(x,y)=N\int_{(\R^d)^{N-1}}\Gamma^{\ssup a}_N\big(x,x_2,\dots,x_N,y,x_2,\dots,x_N\big)\,\d x_2\cdots \d x_N,\qquad x,y\in\R^d\, ,
\end{equation}
where we used the symbol $\Gamma^{\ssup a}_N$ for both the operator and its kernel.

The principal $L^2(\R^d)$-eigenvalue of $\Gamma_N^{\ssup a}$ is defined as
\begin{equation}\label{Gammaeigenvalue}
\sigma_N^{\ssup a}=\sup_{f\in L^2(\R^d)\colon \|f\|_2=1}\langle f, \gamma^{\ssup a}_N f\rangle.
\end{equation}
We say that the Bose gas exhibits {\it off-diagonal long-range order (ODLRO)} if $\sigma_N^{\ssup a}$ is of order $N$ as $N\to\infty$. The occurrence of ODLRO is generally acknowledged (see \cite{LSSY05}) as a criterion for the occurrence of Bose--Einstein condensation (BEC).

The case $a=N^{-2/d}$ is particularly interesting and is called the {\em semi-classical limit} setting, see \cite{DS21} and \cite{BK27}. On this scale, the famous condensation phase transition is observed at a critical value $\beta_{\rm c}\in(0,\infty)$ of $\beta$. This has been first explored in \cite{DS21} and has been explicitly worked out in the special case $d=3$, $w(x)=\omega |x|^2$ and under the assumption that the Hessian matrix of $v$ satisfies a particular upper bound that depends on $\omega$. In this case, the condensation effect was proved to hold both on the level of a non-analyticity of the limiting free energy and in terms of ODLRO at a particular value of $\beta$. \cite{DS21} followed an approach that is very common in mathematical physics, via an energy-entropy description and a transition to the Fourier world, while \cite{BK27}, like the present paper, applies the Feynman--Kac formula, a Poisson-point process description and large-deviations techniques, to express and analyse a variational expression for the limiting free energy.

The goal of the present paper is two-fold: (1) we handle also the two cases of {\em sub-} and {\em super-semiclassical regime} (that is, $a_N\ll N^{-2/d}$ and  $a_N\gg N^{-2/d}$, respectively) and prove that  ODLRO does not hold, respectively does hold, and (2) we follow a  probabilistic route that relies on the well-known Feynman--Kac formula and a representation in terms of a Poisson point process, like in our recent paper \cite{KVZ23}. However, in this paper we handle only the non-interacting case and leave the general case to future work.

Let us remark that the special case $a_N=1$ has been considered already in \cite{AK08}, where the Feynman--Kac formula and a combinatorial large-deviations principle was applied to find a variational formula for the limiting free energy; they also provide evidence on 100 percent condensation, but this was not anymore deepened.

\subsection{Representation via  a Poisson point process}\label{sec-PPPintro}

It is the starting point of this paper that the partition function and density matrix can be written in terms of a crucial Poisson point process (PPP). This process was introduced to the study of the Bose gas in \cite{ACK10}, but was already used for the study of other phenomena in statistical mechanics (e.g., conformal invariance in dimension two) in \cite{LW04} under the name {\em Brownian loop soup}. Here we rely on the recent adaptation in \cite{KVZ23} and refer proofs to Appendix A there.

The {\em canonical Brownian bridge measure} from $x\in\R^d$ to $y\in\R^d$ with time horizon $\beta\in(0,\infty)$ is defined  by 
\begin{equation}\label{nnBBM}
\BB^{\ssup \beta}_{x,y}(A)=\frac{\P_x(B\in A;B_\beta\in\d y)}{\d y},\qquad A\subset\Ccal_{\beta}\mbox{ measurable},
\end{equation}
where $\Ccal_\beta$ denotes the set of all continuous functions $[0,\beta]\to\R^d$.
Here, $B=(B_t)_{t\in[0,\beta]}$ is a Brownian motion in $\R^d$ with generator $\Delta$, starting from $x$ under $\P_x$.  We introduce an integrated and weighted version on loops of $\Ccal_\beta$,
\begin{equation}\label{Vdef}
\BB^{\ssup{\beta,w}}( \d f)=\ex^{- \int_0^\beta w(f(s))\,\d s}\,\int_{\R^d} \d x\,\BB^{\ssup{\beta}}_{x,x}(\d f)
= \int_{\R^d} \d x\,\E_x\big[\ex^{- \int_0^\beta w(B_s)\,\d s}\1\{B\in \d f\}\1\{B_\beta\in\d x\}\big]/\d x.
\end{equation}

Now we introduce the Poisson process called the {\it Brownian loop soup}, the natural reference measure for the Feynman--Kac representation of the Bose gas. We write ${\tt P}_{\beta,w}^{\ssup N}$ for the probability measure of a Poisson point process (PPP) $\eta=\sum_f \delta_f$  with intensity measure 
\begin{equation}\label{nudef}
\nu_{\beta, w}^{\ssup N}(\d f)=\sum_{k=1}^N \frac 1k \BB^{\ssup{k \beta, w}}(\d f),\qquad f\in\bigcup_{k\in\N}\Ccal_{k\beta}=\widehat  \Ccal_\beta.
\end{equation}
If $f\in\Ccal_{k\beta}$ is an element of $\eta$, we say it is a \emph{loop} of length $\ell(f)=k$. Then $X_k=\#\{f\in\eta\colon \ell(f)=k\}$ is the number of loops with length $k$. Then $(X_k)_{k\in[N]}$ is a collection of independent Poisson-distributed random variables $X_k$ with parameter $\frac 1k \BB^{\ssup{\beta k,w}}(\Ccal_{\beta k})$. (We write $[N]$ for $\{1,\dots,N\}$.) We write ${\mathfrak N}(\eta)=\sum_{f\in \eta} \ell(f)=\sum_{k\in[N]}k X_k$ for the number of all particles  in the process $\eta$.  

The following is a variant of \cite[Lem. 1.2 and Cor. 1.4]{KVZ23}.

\begin{lemma}[PPP-representation of the reduced density matrix]\label{Cor-PPPreprfree}
For any $a$, $\beta\in(0,\infty)$ and $N\in \N$ and for all $x,y\in\R^d$,
\begin{equation}\label{PPPreprfree}
    \gamma^{\ssup a}_N(x,y)=\sum_{r=1}^N \BB_{x,y}^{\ssup{\beta a r,w/a}}(\Ccal_{\beta a r})
    \frac{{\tt P}_{\beta a,w/a}^{\ssup N}(\Nrd=N-r)}
    { {\tt P}_{\beta a,w/a}^{\ssup N}(\Nrd=N)}.
    \end{equation}
\end{lemma}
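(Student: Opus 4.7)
My plan is to derive the identity by unfolding the Feynman--Kac representation of $\ex^{-\beta\Hcal^{\ssup N}_{a,w}}$ on the symmetric $N$-particle space, carrying out the symmetrisation as a sum over permutations, and then reorganising the sum by cycle types. As a preliminary step I would use the scaling $\Hcal^{\ssup N}_{a,w}=a\,\Hcal^{\ssup N}_{1,w/a}$ already noted in \eqref{Zdef} to reduce to the standard Laplacian at inverse temperature $\beta a$ with potential $w/a$; this is what produces the parameters $\beta a$ and $w/a$ appearing everywhere on the right-hand side of \eqref{PPPreprfree}. For unsymmetrised bosons, the Feynman--Kac kernel factorises as $\prod_{i=1}^N \BB^{\ssup{\beta a,w/a}}_{x_i,y_i}(\Ccal_{\beta a})$, and symmetrisation inserts $\frac{1}{N!}\sum_{\sigma\in\Sym_N}$ replacing $y_i$ by $y_{\sigma(i)}$.

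Next I would specialise to the kernel $(x,x_2,\dots,x_N;y,x_2,\dots,x_N)$ and integrate out $x_2,\dots,x_N$. For a fixed permutation $\sigma\in\Sym_N$, let $r$ be the length of the cycle of $\sigma$ containing the index~$1$. Integrating out the coordinates indexed by that cycle concatenates $r$ independent bridges of time-length $\beta a$ into a single bridge of time-length $r\beta a$ from $x$ to $y$, producing the factor $\BB^{\ssup{r\beta a,w/a}}_{x,y}(\Ccal_{r\beta a})$. Each of the remaining cycles of length $k$ collapses, upon integration over its coordinates, to $\BB^{\ssup{k\beta a,w/a}}(\Ccal_{k\beta a})$. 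Standard cycle-counting (the number of $\sigma\in\Sym_N$ whose cycle through $1$ has length $r$ and whose remaining cycle type is $(n_k)_{k\geq 1}$ with $\sum k n_k=N-r$ equals $(N-1)!/\prod_k k^{n_k}n_k!$) lets me resum, yielding
\begin{equation*}
\gamma^{\ssup a}_N(x,y)=\frac{1}{Z_N(\beta,a,w)}\sum_{r=1}^{N}\BB^{\ssup{r\beta a,w/a}}_{x,y}(\Ccal_{r\beta a})\,Z_{N-r}(\beta,a,w),
\end{equation*}
where the factor $N$ from \eqref{gammaNdef} cancels against the $1/N!$ of symmetrisation together with $(N-1)!$.

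It remains to identify the ratio $Z_{N-r}/Z_N$ with the PPP-ratio appearing in \eqref{PPPreprfree}. Applying the same cycle-type expansion to $Z_N(\beta,a,w)$ itself gives
\begin{equation*}
Z_M(\beta,a,w)=\sum_{(n_k)_{k\geq 1}:\sum kn_k=M}\prod_{k\geq 1}\frac{1}{n_k!}\left(\frac{1}{k}\BB^{\ssup{k\beta a,w/a}}(\Ccal_{k\beta a})\right)^{n_k}
\end{equation*}
for every $M\leq N$, and since the intensity measure $\nu^{\ssup N}_{\beta a,w/a}$ in \eqref{nudef} makes $X_k=\#\{f\in\eta:\ell(f)=k\}$ independent Poisson variables with parameters $\lambda_k=\frac{1}{k}\BB^{\ssup{k\beta a,w/a}}(\Ccal_{k\beta a})$, one gets ${\tt P}^{\ssup N}_{\beta a,w/a}(\Nrd=M)=\ex^{-\sum_{k=1}^{N}\lambda_k}\,Z_M(\beta,a,w)$ for $M\leq N$. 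The exponential prefactor is independent of $M$ and cancels in the ratio $Z_{N-r}/Z_N$, yielding the desired formula.

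I do not expect a real obstacle; this is an accounting exercise parallel to \cite[Lem.~1.2 \& Cor.~1.4]{KVZ23}, with the scaling substitution $(\beta,w)\leftrightarrow(\beta a,w/a)$ being the only novelty. The only points requiring mild care are: (i) verifying that bridges of time-length $\beta a$ concatenate into a bridge of time-length $r\beta a$ (an application of the Chapman--Kolmogorov identity for the heat semigroup with potential $w/a$, giving exactly the exponential weight $\ex^{-\int_0^{r\beta a}(w/a)(f(s))\,\d s}$), and (ii) checking that restricting cycle lengths to $[N]$ in \eqref{nudef} is harmless because in both $Z_{N-r}$ and $Z_N$ only cycles of length $\leq N$ can occur anyway.
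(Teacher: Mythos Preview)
Your proposal is correct and follows essentially the same route the paper outlines (and attributes to \cite[Appendix A]{KVZ23} and \cite{G70,ACK10}): Feynman--Kac for the unsymmetrised kernel, symmetrisation over $\Sym_N$, decomposition by the cycle through index~$1$ with concatenation via Chapman--Kolmogorov, and then rewriting the ratio $Z_{N-r}/Z_N$ as a ratio of PPP probabilities. The cycle-counting combinatorics and the observation that the normalising exponential $\ex^{-\sum_k\lambda_k}$ cancels in the ratio are exactly the ingredients used there.
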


We refer to \cite[Appendix A]{KVZ23} for the proof of Lemma~\ref{Cor-PPPreprfree}.
Indeed, the proof consists of a series of reformulations of the symmetrized trace: first in terms of $N$ Brownian bridges with time-interval $[0,\beta]$ and a symmetrization, then (using the Markov property, respectively the Chapman--Kolmogorov equations) in terms of a collection of Brownian bridges with various lengths with total sum equal to $N$ and equal initial and terminal sites, and finally  a translation into the language of Poisson point processes. The first two reformulations are due to \cite{G70}, the last one to \cite{ACK10}.

The representation is the starting point  of our analysis. It also gives a frame for the description of the mean-field Bose gas that is explicitly built on an ensemble of loops, which we will be using as order parameters.

\subsection{Our main results: Long loops and ODLRO in the mean-field Bose gas}\label{sec-results}

Let us formulate our precise assumption on the trap potential $w$.
For our purposes, it will be important to control the behaviour of $w$ at its minimum.  We write $\{w=0\}$ for $\{x \in\R^d\colon w(x)=0\}$; similarly for $\{w<\infty\}$ and other sets like that. 

\medskip {\bf Assumption (W).} \textit {We assume that $w\colon\R^d\to[0,\infty]$ is continuous in $\{w<\infty\}$, and there is a parameter $\alpha\in(0,\infty]$ together with a family of functions $W,W_\eps\colon\R^d\to[0,\infty]$,
such that
\begin{itemize}
\item We define
\begin{equation*}
 W_\eps(x)=
 \begin{cases}   
\e^{-\alpha}w(x\e)&\mbox{in the case }\alpha<\infty,\\
\e^{-1} w(x)&\mbox{in the case }\alpha=\infty,
 \end{cases}
 \qquad x\in\R^d, \eps\in(0,1];
\end{equation*}
\item
$\int_{\R^d} \ex^{-\beta \inf_{\eps\in(0,1]}W_\eps(x)}\,\d x<\infty$ for any $\beta\in(0,\infty)$;
\item for any $f\in L^1(\R^d)$, 
\[
\langle W_\eps,f\rangle\rightarrow \langle W,f\rangle\qquad\mbox{as }\eps\downarrow 0;
\]
\item $W$ is continuous in $\{W<\infty\}$ and satisfies $\inf W=0$;
\item $W$ has a unique minimum at $x=0$ if $\alpha<\infty$;
\item $W=0$ in a neighborhood of $x=0$ if $\alpha=\infty$.
\end{itemize}}
In particular, under Assumption (W), $w\geq 0$ and  $w(x)=W_1(x)\geq \inf_{\eps\in(0,1]} W_\eps(x)$ and $\lim_{|x|\to\infty}w(x)=\infty$ so fast that all negative moments $\int_{\R^d}\ex^{-t w(x)}\,\d x$ are finite for $t>0$. 
Hence the $L^2$ operator $-\Delta+w$ 
has a discrete spectrum with spectral gap, i.e. it has an $L^2$-orthonormal basis $(\phi_i^{\ssup w})_{i\in\N}$ of eigenfunctions with associated eigenvalues 
\begin{align}\label{eq:sp_gap}
0<\lambda_1(w)<\lambda_2(w)\leq \lambda_3(w)\leq \dots,
\end{align}
and we take the sign of $\phi_1^{\ssup w}$ such that it is positive everywhere in $\{w<\infty\}$. See \cite[Theorem 4.72, Theorem 4.125]{BHL11} for details.
Same property holds for $-\Delta+W$ and $-\Delta+W_\e$.

We remark that $\alpha=\infty$ implies $W\in\{0,\infty\}$. 
One possible choice is $w=W=\infty\1_{Q^{\rm c}}$ for  a centered box $Q$ in the case $\alpha=\infty$ or the  harmonic trap potential $w(x)=W(x)=|x|^2$ in the case $\alpha=2$. 
Furthermore, in the case $\alpha<\infty$, we have $W(x)=\abs{x}^\alpha W(\frac{x}{\abs{x}})$ for $x\in\R^d$; this includes the case $W(x)=c\abs{x}^\alpha$, in particular the case of a harmonic trap. Certainly, lots of generalisations of Assumption (W) will admit our results, but would require more technical efforts and do not substantially increase the list of interesting potentials. In particular, $\int \ex^{-\beta j w}\d x$ is decreasing in $j\in\N$ and finite.

We fix $\beta\in(0,\infty)$ for the rest of the paper and do not everywhere reflect its dependence in the following. For a realization $\eta$ of the PPP with distribution ${\tt P}_{\beta a,w/a}^{\ssup N}$, we consider the sequence $L(\eta)=(L_i)_{i\in\N}$, defined as the sequence  of all the lengths $\ell(f)$ with $f\in\eta$, ordered according to their size, and counted with multiplicity. That is, $L_i$ is the number of particles in the $i$-th longest loop in the configuration.

Let us recall that the Poisson--Dirichlet distribution with parameters $0$ and $1$ (denoted $\textsf{PD}_1$) is given as the joint distribution of the random variables $(Y_n\prod_{k=1}^{n-1} (1-Y_k))_{n\in\N}$, where $(Y_n)_{n\in\N}$ is an i.i.d.
~sequence uniformly distributed over $[0,1]$.
Note that the sum of the elements of a $\textsf{PD}_1$-distributed sequence is equal to one, i.e., this distribution is in fact a random partition. It is well-known in asymptotics for random permutations, as the joint distribution of the lengths of all the cycles of a uniformly picked random permutation of $1,\dots,N$, ordered according to their sizes and normalized by a factor $1/N$, converges weakly to $\textsf{PD}_1$.

An important quantity is
\begin{equation}\label{rhowdef}
\rho_w=\sum_{j\in\N} \frac{\Wj_j}{(4\pi \beta j)^{d/2}}\in(0,\infty],\qquad  \mbox{where }\Wj_j=\int_{\R^d}\d x \,\ex^{-\beta j w(x) }.
\end{equation}
Furthermore, we need to introduce the {\em pressure}
\begin{equation}\label{pressuredef}
p(u)=\sum_{j=1}^\infty \frac{\ex^{\beta u j}}j  \frac{\Wj_j}{(4\pi \beta j)^{d/2}},\qquad u\in(-\infty,0].
\end{equation}
Then $p$ is analytic in $(-\infty,0)$ and diverges in $(0,\infty)$ with $p'(0)=\beta \rho_w$, where $p'(0)$ is the left-derivative at $0$. For $\chi\in(0, \infty)$, define $u_\chi\in (-\infty,0)$  by $p'(u_\chi)=\chi\beta$ if $\chi< \rho_w$ and $u_\chi=0$ for $\chi\geq \rho_w$. Then we define the sequence
        \begin{equation}\label{alphadef}
 \alpha^{\ssup\chi} =(  \alpha_j^{\ssup\chi})_{j\in\N},\qquad  \alpha_j^{\ssup\chi}=\frac{\ex^{\beta u_\chi j}}\chi \frac{\Wj_j}{(4\pi \beta j)^{d/2}},\qquad j\in\N.
\end{equation} 
Then, in the limit $\chi\downarrow0$; we have $u_\chi\to-\infty$; more precisely, $u_\chi=\frac {1+o(1)}\beta\log( \frac{(4\pi\beta)^{d/2}}{\Wj_1}\chi)$ and hence $\alpha_j^{\ssup\chi}\to\delta_{j,1}$. 
We extend the definition by taking
$\alpha^{\ssup0}=(1,0,0,\dots)$,
$\alpha^{\ssup \infty}=0$.


\begin{theorem}[Asymptotics of reduced density matrix and loop length distribution]\label{thm-ODLROlongloops}
Suppose that the trap potential $w$ satisfies Assumption (W) and that $\rho_w<\infty$. Pick a bounded sequence $(a_N)_{N\in\N}$ in $(0,\infty]$ and recall $\chi=\lim_{N\to\infty} Na_N^{d/2}\in[0,\infty]$ as in \eqref{chidef}. 
    \begin{enumerate}
        \item{\bf Supercritical case: $\boldsymbol{\chi>\rho_w}$.} The following holds in the limit as $N\to\infty$:
            \begin{enumerate}
\item In weak $L^2$-sense,
       \begin{equation}\label{gammaasy}
            \gamma^{\ssup {a_N}}_N(x,y)= N\Big(1- \frac{\rho_w}{\chi}\Big)\phi_1^{\ssup{w/a_N}}(x)\phi_1^{\ssup{w/a_N}}(y)(1+o(1)),\qquad x,y\in\R^d.
        \end{equation}
\item The distribution of the loop lengths $\frac {1}{N(1-\rho_w/\chi)} (L_i)_{i\in\N }$ under ${\tt P}_{\beta a_N,w/a_N}^{\ssup N}(\cdot \mid {\mathfrak N}=N)$ converges to $\textsf{PD}_1$.

\item The distribution of the sequence $\frac 1N(i X_i)_{i\in\N}$ under ${\tt P}_{\beta a_N,w/a_N}^{\ssup N}(\cdot \mid {\mathfrak N}=N)$  converges in product topology on $\ell^\infty$ towards $\alpha^{\ssup\chi}$.

\item The free energy defined in \eqref{freeenergyMF} is identified as
$$
{\rm f}_{\rm MF}(\beta,\chi)=-\frac{p(0)}{\beta \chi}+\lambda_1(W)(1-\smfrac{\rho_w}\chi)\lim_{N\to\infty}a_N.
$$
            
            \end{enumerate}
                 
        \item {\bf Subcritical case: $
            \boldsymbol{\chi<\rho_w}$.} The following holds in the limit as $N\to\infty$:
            \begin{enumerate}
            \item There is a $c\in(0,\infty)$ such that
        \begin{equation}\label{gammabound}
            \gamma^{\ssup {a_N}}_N(x,y)= \Ocal\rk{a_N^{-d/2}\ex^{-c\abs{x-y}a_N^{-1/2}}},\qquad x,y\in\R^d.
        \end{equation}
        \item Under ${\tt P}_{\beta a,w/a}^{\ssup N}(\cdot \mid {\mathfrak N}=N)$, the sequence $\frac 1N (i X_i)_{i\in\N }$ converges weakly in $\ell^1$-norm to $\alpha^{\ssup\chi}$.
        
        \item  The free energy defined in \eqref{freeenergyMF} is identified as
$$
{\rm f}_{\rm MF}(\beta,\chi)=
\begin{cases}
-\frac{p(u_\chi)}{\beta \chi}+u_\chi,&\mbox{if }\chi>0,\\
-\infty&\mbox{if }\chi=0.
\end{cases}
$$
\end{enumerate}
    \end{enumerate}

\end{theorem}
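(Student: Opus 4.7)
My plan is to work directly with the Poisson-process representation of Lemma~\ref{Cor-PPPreprfree} and reduce both parts of the theorem to a sharp analysis of the cycle counts $(X_k)_{k\in[N]}$ under $\Ptt_{\beta a_N,w/a_N}^{\ssup N}$, together with a Gibbs-conditioning / local-limit argument for the event $\{\Nrd=N\}$. The basic input is the $k$-loop mass $\smfrac1k\BB^{\ssup{k\beta a_N,w/a_N}}(\Ccal_{k\beta a_N})=\smfrac1k\int p_{k\beta a_N}^{\ssup{w/a_N}}(x,x)\,\d x$, where $p$ is the Feynman--Kac kernel of $-\Delta+w/a_N$. Via Feynman--Kac, for $k$ with $k\beta a_N$ small the leading order is $(4\pi\beta ka_N)^{-d/2}\Wj_k$; for $k\beta a_N$ of unit order or larger one uses the scaling built into Assumption~(W), namely that after the change of variables $x=a_N^{1/(\alpha+2)}\tilde x$ the operator $-\Delta+w/a_N$ is conjugate to $a_N^{-2/(\alpha+2)}(-\Delta+W_{a_N^{1/(\alpha+2)}})$ with $W_\eps\to W$. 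In both regimes the $k$-loop rate equals $a_N^{-d/2}\Wj_k/(k(4\pi\beta k)^{d/2})(1+o(1))$. Summing over $k\in[N]$ gives $\E[\Nrd]/N\to\rho_w/\chi$, which is the driving mechanism of the phase transition: the conditioning $\{\Nrd=N\}$ is atypically large on $\Nrd$ when $\chi>\rho_w$ and atypically small when $\chi<\rho_w$.

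Tilting the PPP exponentially by $\ex^{\beta u\Nrd}$ gives the log-moment generating function $a_N^{-d/2}(p(u)-p(0))$ of $\Nrd$, and the saddle-point equation $p'(u_\chi)=\beta\chi$ (or $u_\chi=0$ when $\chi\geq\rho_w$) identifies the correct chemical potential. A local central limit theorem for $\Nrd$ under the tilted law produces sharp asymptotics for $\Ptt^{\ssup N}_{\beta a_N,w/a_N}(\Nrd=N)$, from which the free-energy formulas in (1d) and (2c) follow via the standard identity $Z_N(\beta a_N,1,w/a_N)=\ex^{\nu^{\ssup N}_{\beta a_N,w/a_N}(\widehat\Ccal_\beta)}\Ptt^{\ssup N}_{\beta a_N,w/a_N}(\Nrd=N)$. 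The subcritical statement (2b) is then standard Gibbs conditioning: $\E_{u_\chi}[kX_k/N]\to\alpha_k^{\ssup\chi}$ by the intensity computation, and the Poisson independence of the $X_k$ combined with the local limit theorem gives $\ell^1$-convergence under the conditional measure. For (2a), $\Ptt(\Nrd=N-r)/\Ptt(\Nrd=N)\to\ex^{-\beta u_\chi r}$ uniformly for $r$ at moderate scales, so Lemma~\ref{Cor-PPPreprfree} yields $\gamma^{\ssup{a_N}}_N(x,y)\approx\sum_{r=1}^N\ex^{-\beta u_\chi r}p_{r\beta a_N}^{\ssup{w/a_N}}(x,y)$; bounding each kernel by $(4\pi\beta a_N r)^{-d/2}\ex^{-|x-y|^2/(4\beta a_N r)}$ times an $O(1)$ Feynman--Kac factor and minimising $|x-y|^2/(4\beta a_N r)+\beta(-u_\chi)r$ in $r$ produces the decay $a_N^{-d/2}\ex^{-c|x-y|/\sqrt{a_N}}$ with $c=\sqrt{-u_\chi\beta}$.

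The supercritical regime ($\chi>\rho_w$, $u_\chi=0$) is where most of the work sits. I would split the $r$-sum in Lemma~\ref{Cor-PPPreprfree} at a truncation $R_N$ with $R_N\to\infty$, $R_N=o(N)$ and $R_N\gg[\beta a_N(\lambda_2(w/a_N)-\lambda_1(w/a_N))]^{-1}$: on the short part the contribution is $O(a_N^{-d/2})=o(N)$ and is therefore negligible at scale $N$, while on the long part the spectral expansion $p_{r\beta a_N}^{\ssup{w/a_N}}(x,y)=\ex^{-r\beta a_N\lambda_1(w/a_N)}\phi_1^{\ssup{w/a_N}}(x)\phi_1^{\ssup{w/a_N}}(y)(1+o(1))$ takes over. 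A moderate-deviation local limit theorem for the tilted $\Nrd$ shows that, after setting $r=Ns$, the ratio $\Ptt(\Nrd=N-r)/\Ptt(\Nrd=N)$ concentrates on the distribution of the total long-loop length on $[0,1-\rho_w/\chi]$. Summing the spectral kernel against this density produces the prefactor $N(1-\rho_w/\chi)$ and the rank-one structure $\phi_1^{\ssup{w/a_N}}(x)\phi_1^{\ssup{w/a_N}}(y)$ in (1a); (1b) is then the classical Kingman size-biased construction of $\textsf{PD}_1$ applied to the lengths of the long loops; (1c) follows because the short loops decouple from the condensate and inherit the $u_\chi=0$ intensities $\alpha_j^{\ssup\chi}$; and the condensate correction $\lambda_1(W)(1-\rho_w/\chi)\lim a_N$ in (1d) originates from the exponential $\ex^{-N(1-\rho_w/\chi)\beta a_N\lambda_1(w/a_N)}$ combined with the scaling limit of $a_N\lambda_1(w/a_N)$ afforded by Assumption~(W).

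The main obstacle will be the uniformity in $k$ of the Feynman--Kac kernel asymptotics: the heat-kernel expansion $p_{k\beta a_N}^{\ssup{w/a_N}}(x,x)\sim(4\pi k\beta a_N)^{-d/2}\ex^{-k\beta w(x)}$ must match the spectral expansion $\sim\ex^{-k\beta a_N\lambda_1(w/a_N)}\phi_1^{\ssup{w/a_N}}(x)^2$ uniformly in $k$, in particular across the crossover at $k\asymp[\beta a_N\lambda_1(w/a_N)]^{-1}$. This requires both the scaling $-\Delta+W_\eps\to-\Delta+W$ of Assumption~(W) at the level of the ground state and off-diagonal Gaussian bounds on the kernel that are sharp uniformly as $a_N\to0$. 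Once these inputs are in place, the local limit theorems for $\Nrd$ in both regimes and the Kingman construction of $\textsf{PD}_1$ are on the same footing as in the companion paper~\cite{KVZ23} and should transfer with only minor modifications.
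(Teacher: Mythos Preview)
Your proposal follows essentially the same route as the paper: the PPP representation of Lemma~\ref{Cor-PPPreprfree}, chemical-potential tilting and a local CLT in the subcritical regime, spectral-gap expansion of the Feynman--Kac kernel for long cycles in the supercritical regime, and the identity $Z_N=\ex^{\nu^{\ssup N}(\widehat\Ccal_\beta)}\Ptt(\Nrd=N)$ for the free energy. Your identification of the crossover scale $k\asymp[\beta a_N\lambda_1(w/a_N)]^{-1}\asymp a_N^{-\alpha/(\alpha+2)}$ and of the matching problem between the short-time and spectral expansions is exactly what the paper's Lemma~\ref{lem-tasy} addresses (note that your earlier phrase ``$k\beta a_N$ of unit order'' is not the right crossover unless $\alpha=\infty$; also the claim that the $k$-loop rate equals $a_N^{-d/2}\Wj_k/(k(4\pi\beta k)^{d/2})$ ``in both regimes'' is only true below the crossover --- above it the spectral decay takes over, though this does not affect $\Ett[\Nrd]$ at leading order).

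One point needs sharpening. In the supercritical case you invoke a ``moderate-deviation local limit theorem for the tilted $\Nrd$'', but since $u_\chi=0$ there is no tilting available and $\{\Nrd=N\}$ is a genuine upward large deviation to which a standard LLT does not apply. The paper's device is to split $\Nrd=\Ns+\Nl$ at the threshold $T_N=\lfloor a_N^{-\alpha/(\alpha+2)}\sqrt{\log(1/a_N)}\rfloor$ in the \emph{loop-length} variable (this is separate from your split of the $r$-sum): $\Ns$ concentrates exponentially around $N\rho_w/\chi$ (Proposition~\ref{prop:2.6}), while for $\Nl$ the long-loop intensities are asymptotically pure-geometric $\sim\ex^{-\beta a_N\lambda_1(w/a_N)j}/j$, and a combinatorial result of Arratia--Barbour--Tavar\'e \cite[Thm.~4.13]{arratia2003logarithmic} yields the sharp formula $\Ptt(\Nl=k)\sim\ex^{-\gamma}T_N^{-1}\ex^{-\beta a_N\lambda_1(w/a_N)k}$ (Lemma~\ref{lem:2.7}). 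This explicit combinatorial input, rather than a generic LLT, is what produces the ratio $\Ptt(\Nrd=N-r)/\Ptt(\Nrd=N)\sim\ex^{\beta a_N\lambda_1(w/a_N)r}$ needed in (1a) and also drives the $\textsf{PD}_1$ proof in (1b), where the paper computes the joint density of $(L_1,\dots,L_m)$ directly from Lemma~\ref{lem:2.7} and matches it to the known $\textsf{PD}_1$ density, rather than going through the Kingman size-biased construction.
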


The proofs of Theorem~\ref{thm-ODLROlongloops} (1)(a)--(c) and (2)(a)--(b) are in Section~\ref{sec-ProofODLRO} and \ref{sec-proofthm(2)} respectively, and the identification of the free energy is in Section~\ref{sec-freeenergy}. Our main proof methods are spectral-theoretic (as it concerns the term $\BB_{x,y}{\ssup{\beta  ar}}$ in \eqref{PPPreprfree}), combinatorial (for handling the two probability terms in \eqref{PPPreprfree}); their base is probabilistic, since we will be relying on the useful independence properties of the PPP.

Let us draw consequences about the Bose--Einstein phase transition from Theorem~\ref{thm-ODLROlongloops}. From \eqref{gammaasy} it quickly follows that $\frac 1N \sigma_N^{\ssup{a_N}}\to 1- \rho_w/\chi>0$ (i.e., ODLRO holds), and from \eqref{gammabound} it easily follows that $\sigma_N^{ \ssup{a_N}}\leq O(1)$ (i.e., ODLRO does not hold):

\begin{corollary}[Consequences for (non-)occurrence of ODLRO]
    Equation \eqref{gammaasy} implies ODLRO while Equation \eqref{gammabound} implies its absence. 
\end{corollary}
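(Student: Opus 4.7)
Since $\gamma_N^{\ssup{a_N}}$ is a positive operator, $\sigma_N^{\ssup{a_N}}$ is precisely its $L^2$-operator norm, and the corollary amounts to a lower bound of order $N$ in case~(1) and an upper bound of order $o(N)$ in case~(2). For the former I would use the variational principle with the natural test function singled out by the rank-one limit in \eqref{gammaasy}; for the latter I would apply Schur's test to the pointwise kernel bound \eqref{gammabound}.

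For the supercritical case ($\chi>\rho_w$), take $f=\phi_1^{\ssup{w/a_N}}$, which is $L^2$-normalized by construction. Then the variational formula \eqref{Gammaeigenvalue} combined with \eqref{gammaasy} evaluated at $f=g=\phi_1^{\ssup{w/a_N}}$ yields
\begin{equation*}
\sigma_N^{\ssup{a_N}} \;\geq\; \bigl\langle \phi_1^{\ssup{w/a_N}},\, \gamma_N^{\ssup{a_N}}\, \phi_1^{\ssup{w/a_N}}\bigr\rangle \;=\; N\bigl(1-\smfrac{\rho_w}{\chi}\bigr)\bigl\|\phi_1^{\ssup{w/a_N}}\bigr\|_2^4(1+o(1)) \;=\; N\bigl(1-\smfrac{\rho_w}{\chi}\bigr)(1+o(1)),
\end{equation*}
so $\sigma_N^{\ssup{a_N}}/N$ is bounded below by the strictly positive constant $1-\rho_w/\chi$, which is exactly ODLRO.

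For the subcritical case ($\chi<\rho_w$), the kernel is symmetric, so Schur's test gives $\sigma_N^{\ssup{a_N}}\leq \sup_{x\in\R^d}\int_{\R^d} \gamma_N^{\ssup{a_N}}(x,y)\,\d y$. Inserting \eqref{gammabound} and substituting $z=(x-y)a_N^{-1/2}$ (Jacobian $a_N^{d/2}$) transforms the integral into $C\int_{\R^d} \ex^{-c|z|}\,\d z$, a finite $N$-independent constant, so $\sigma_N^{\ssup{a_N}}=\Ocal(1)=o(N)$ and ODLRO fails. The only point requiring attention is the legitimacy of plugging the $N$-dependent function $\phi_1^{\ssup{w/a_N}}$ into the statement of \eqref{gammaasy} in the supercritical step: when $a_N\to 0$ this eigenfunction concentrates at $\{w=0\}$ and fails to be tight in $L^2$, so fixed $L^2$ test functions would only produce quadratic forms of order $Na_N^{d/(2+\alpha)}=o(N)$. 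However, because the rank-one limit in \eqref{gammaasy} is itself the projection onto $\phi_1^{\ssup{w/a_N}}\otimes\phi_1^{\ssup{w/a_N}}$, the spectral nature of the proof in Section~\ref{sec-ProofODLRO} will deliver the asymptotic exactly against this distinguished pair, which is the strongest instance rather than a strengthening of the stated weak $L^2$-convergence.
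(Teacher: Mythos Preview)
Your proof is correct and follows essentially the same approach as the paper's. In the supercritical case you both plug $\phi_1^{\ssup{w/a_N}}$ into the variational characterization \eqref{Gammaeigenvalue}; in the subcritical case your Schur's test and the paper's Young's convolution inequality reduce to the identical computation $a_N^{-d/2}\int_{\R^d}\ex^{-c|z|a_N^{-1/2}}\,\d z=\Ocal(1)$ for a convolution-type kernel. Your added discussion of the legitimacy of $N$-dependent test functions is a worthwhile remark that the paper leaves implicit (it is in fact covered by Lemma~\ref{lem-tasy}(3), which explicitly allows $a$-dependent $f,g$ with bounded norms).
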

\begin{proof}
 (1)   In \eqref{Gammaeigenvalue} we use $\phi_1^{\ssup{w/a}}$ for $f$ and obtain
    \begin{eqnarray}
        \sigma_N^{\ssup a}\ge \langle \phi_1^{\ssup{w/a}}, \gamma^{\ssup {a_N}}_N\phi_1^{\ssup{w/a}}\rangle\ge  N\rk{1-\frac{\rho_w}\chi}\norm{\phi_1^{\ssup{w/a}}}_{2}^2= N\rk{1-\frac{\rho_w}{\chi}}\, .
    \end{eqnarray}

  (2)  To prove absence of ODLRO, we use Young's convolution inequality to estimate
    \begin{eqnarray}
        \sigma_N^{\ssup a}=\sup_{f\in L^2(\R^d)\colon \|f\|_2=1}\langle f, \gamma^{\ssup a}_N f\rangle
        \le \Ocal(a_N^{-d/2}) \norm{\ex^{-c\abs{\cdot }a_N^{-1/2}}}_1=\Ocal\rk{a_N^{-d/2} a_N^{d/2}}=\Ocal(1)\, .
    \end{eqnarray}
\end{proof}

\begin{remark}[Total mass in micro- and macroscopic loops] 
Theorem~\ref{thm-ODLROlongloops} implies that the total mass of particles in microscopically long loops is $\sim N [\min\{1,\rho_w/\chi\}]$, while the total mass in macroscopically long loops is $\sim N(1-\rho_w/ \chi)_+$. This shows a phase transition in $\chi$ at $\chi=\rho_w$ between occurrence and non-occurrence  of particles in macroscopic loops. This is the famous Bose--Einstein condensation phase transition. When putting $\chi=1$ (i.e., in the semi-classical regime), it can be found at $\beta=\beta_{\rm c}$, defined by $\rho_w(\beta_{\rm c})=1$. Note that Theorem~\ref{thm-ODLROlongloops} implies that there is only $o(N)$ particles in  other loops, i.e., in mesoscopically long loops.

Furthermore, for $\chi=0$ (i.e., $a_N=o(N^{-2/d})$), we observe that only loops of length one contribute, and the free energy is equal to $-\infty$. For $\chi=\infty$  (i.e., $a_N\gg N^{-2/d}$), we observe hundred percent condensation, more precisely, hundred percent of particles are in macroscopic loops. This includes the special case $a_N=1$, where  \cite{AK08} identified the free energy with other methods, but had no assertion about loop lengths.
\hfill$\Diamond$
\end{remark}

\begin{remark}[Spatial distribution of the condensate]\label{rem-spatialdistcond} The spatial density of the location of the condensate is equal to $x\mapsto \frac 1N \gamma^{\ssup{a_N}}(x,x) $. Note from Lemma~\ref{Cor-PPPreprfree} that this is the density of the location of the initial site of the  sample loop in the loop soup, weighted  with the number of particles in that loop (since the weight of a length-$r$ loop starting from $x$ is equal to $\frac 1r \BB_{x,x}^{\ssup{\beta a r, w/a}}$). This clarifies the suggestion by Feynman \cite{F53} about the loop weights as an order parameter for the condensate in the loop soup. According to Theorem~\ref{thm-ODLROlongloops}(1)(a), the spatial condensate density is asymptotically distributed with density equal to  the total condensate mass times $(\phi_1^{\ssup {w/a}})^2$. According to Lemma ~\ref{lem_spectralscaling} below, this density has a spatial rescaling with scaling parameter $a^{-\alpha/(\alpha+2)}$ and rescaled shape equal to $(\phi_1^{\ssup W})^2$. Hence, the condensate shrinks together to the origin in the case $\alpha>0$ and is distributed like the square of the principal eigenfunction of $-\Delta+\infty\1_{\{W=\infty\}}$ in the case $\alpha=0$.
\hfill$\Diamond$
\end{remark}

\begin{remark}[Finiteness of $\rho_w$]
Under Assumption (W), $(\int \ex^{-\beta j w}\d x)_{j\in\N}$ is bounded, and hence $\rho_w$ is finite at least in $d\geq 3$.

In the special case that $w=\infty\times\1_{Q^{\rm c}}$, where $Q$ is the centred box of volume $1/\rho$, then $\int \ex^{-\beta j w}\d x=1/\rho$ and hence $\rho_w= \frac 1\rho(4\pi\beta)^{-d/2}\zeta(d/2)$, where $\zeta$ denotes the Riemann zeta function. Here, $\rho_w$ is finite only in dimension $d\geq 3$, and $ \lambda_1(w)$ is equal to the Dirichlet zero eigenvalue of the Laplace operator in $Q$ with corresponding principal eigenfunction $\phi_1$. This is -- up to scaling -- equal to the situation in the free Bose gas in the thermodynamic regime with Dirichlet boundary condition, see \cite{KVZ23}.

However, in the case of a harmonic trap, or, more generally in  the case that $w(x)\sim D |x|^2$ for $x\to0$ for some $D>0$, then $\int \ex^{-\beta j w}\d x\sim (\pi/ \beta j D) ^{d/2}$, as one sees by a standard Gaussian approximation. In this case, $\rho_w$ is finite also in $d=2$. It is no problem to construct examples of potentials $w$ such that $\rho_w<\infty$ also in $d=1$.
\hfill$\Diamond$
\end{remark}

\subsection{Literature remarks}\label{sec-literature}

The study of quantum gases, in particular the Bose gas and its statistical mechanics and condensation, is a huge fascinating subject that provides many challenging questions and involves a lot of mathematical ansatzes and toolboxes, see \cite{PS01,PS03} for extensive summaries.

Interacting quantum gases in various mean-field approximations were recently studied in a series of papers; see the extensive summary \cite{FKSS20}. It contains a wealth of ansatzes and formulas, references and summaries of recent results, mostly by the authors. The small-$a$ regime (in our notation) is coupled in \cite{FKSS20}  with other rescalings (e.g., of the interaction strength), but is also considered for fixed number of particles in a fixed box. Throughout this series of papers, the gas is assumed to be confined to a box with periodic boundary condition, and it is considered in the grand-canonical setting. The main ansatz, like in many investigations in the mathematical physics community, is via the formalism of the second quantisation, i.e., in terms of a formulation using annihilation and creation operators on the Fock space. In that series of works, also the description in terms of Brownian bridges (called a path-integral approach there, as usual in the mathematical physics community) is derived in a way that is alternative to the way that is chosen here (we rely on Ginibre's Feynman--Kac formula via the density of the operator $\ex^{\beta \Delta}$, the Brownian bridge measure), via a number of presentations. This formula is used in \cite{FKSS20} for deriving the $a\downarrow 0$ limit for fixed particle number and fixed box; indeed, the partition sum converges towards the one of an interacting classical gas of $N$ particles. The regime that we consider in the present paper was not considered in \cite{FKSS20}.

The {\em semiclassical limit} (i.e., the choice $a_N\sim N^{-2/d}$) at positive temperature with an interaction scaled by $\frac 1N$, recently attracted some interest, both for fermions \cite{LMT19, FLS18} and for bosons \cite{DS21, BK27}. \cite{DS21} studied a special case of the regime that we investigate in this paper (however, with interaction!),  where $a_N\sim N^{-2/d}$ and $d=3$ and the harmonic trap  $w(x)=\omega |x|^2$ for some $\omega>0$ and a pair-interaction potential $v$ satisfying an upper bound of its Hessian matrix in terms of $\omega$. They managed to prove, among other things, the existence of a phase transition in $\beta$ at some critical value $\in(0,\infty)$: above that value, ODLRO holds and that the condensate concentrates asymptotically in one singe point, the origin, and below that value, BEC does not occur. Their methods are very functional-analytic, start with the Fock-space formulation and rely on reformulations in the Fourier world.

It is the goal of the present  paper to re-prove and re-interpret such results on one hand in greater generality with respect to the regimes of $(a_N)_{N\in\N}$ and the shape of the trap potential $w$, and on the other hand to give probabilistic proofs that show the benefits of the Feynman--Kac representation by Ginibre and the Poisson point process representation introduced in \cite{ACK10} and turn the attention to the Brownian loop soup as an object of its own interest.  (The goal of \cite{BK27} is to do this also with interactions in greater generality than \cite{DS21}.) Rigorous considerations of  Brownian bridges as an order parameter for Bose gases appeared in a few works yet, starting with phenomenological discussions in \cite{U06} and discussions of the relation between long loops and condensate in \cite{S93,S02}. More recently in \cite{FKSS20} conceived the rescaled interaction of the Brownian loops in $d=4$ as a regularization as the intersection local time as a possible ansatz for deriving $\phi^4$-theories. Furthermore, in \cite{BKM24} interactions only within the same loop were admitted in the gas and a related kind of condensation phase transition was proved in connection with the famous self-avoiding walk problem. Finally, in our recent paper \cite{KVZ23}, where ODLRO was explicitly proved via this route for the free Bose gas, a contribution that was apparently missing yet. In the case $a_N=1$, for $w=\infty$ outside a box $\Lambda\subset\R^d$ and continuous inside $\Lambda$, in \cite[Theorem 1.6]{AK08} it was shown that 
$$
\lim_{N\to\infty}\frac 1N\log Z_N(\beta,1, w)=\beta \lambda_1(w).
$$
The proof also starts from the well-known trace formula involving Brownian cycles, but uses a somewhat sophisticated combinatorial approach, which appear unfeasible in the case $a_N\downarrow 0$. It is not difficult to include  interactions (of course, with a prefactor of $\frac 1N$). Using a comparison to  the analogous model with one long Brownian path instead of an ensemble of many cycles, this result was interpreted in \cite{AK08} as the fact that the Bose gas behaves as if it would consist only of one long cycle. But there was no deeper understanding provided in \cite{AK08}.




\section{Preparations for the proofs}\label{sec-preparations}

In this section, we prepare for all the forthcoming proofs by the following: In Section~\ref{sec-funcana} we provide upper bounds and asymptotics for the intensity measure and its total mass of the Poisson point process (PPP), and clarify some spectral scaling properties. In Section~\ref{sec-conc} we show that the particle number in the PPP is with high probability close to its expectation; and we show the same for the number of particles in small loops. In Section~\ref{sec-longloops} we give precise asymptotics for the distribution of the number of particles in long loops, which leads in Section~\ref{sec-lowbound} to a precise lower bound for the distribution of the total number of particles (the denominator in \eqref{PPPreprfree}).

\subsection{Functional analytic properties}\label{sec-funcana}

In this section we provide bounds and precise asymptotics for the intensity measure and its total mass of the crucial Poisson point process that we introduced in  Section~\ref{sec-PPPintro}. We keep $\beta\in(0,\infty)$ fixed and are under Assumption (W) for the potential $w$. 
Recall from \eqref{eq:sp_gap} that the operator $-\Delta+w$ has eigenvalues $0<\lambda_1(w)<\lambda_2(w)\leq \lambda_3(w)\leq \dots$  with a corresponding $L^2$-orthonormal system $(\phi_i^{\ssup w})_{i\in\N}$ of eigenfunctions such that $\phi_1^{\ssup w}$ is positive whenever $w$ is finite.

Recall the Brownian bridge measure $ \BB^{\ssup \beta}_{x,y}$ defined in \eqref{nnBBM}, which is a regular Borel measure on $\Ccal_\beta$ with total mass equal to the Gaussian density,
\begin{equation}\label{Gaussian}
\BB_{x,y}^{\ssup \beta}(\Ccal_{\beta})=g_{\beta}(x,y)=\frac {\P_x(B_\beta\in\d y)}{\d y}=(4\pi\beta)^{-d/2}{\rm e}^{-\frac 1{4\beta}|x-y|^2}.
\end{equation}
We refer the reader to Appendix ${\rm A}$ of \cite{Sz98} for more details on Brownian bridge measures. Recall from \eqref{Vdef} its integrated and weighted version on loops and introduce its total mass
\begin{equation}\label{gammadef}
\oldgamma_{\beta,w}
=\BB^{\ssup{\beta,w}}(\Ccal_\beta)
=\int_{\R^d}\d x\int_{\Ccal_\beta}\BB_{x,x}^{\ssup{\beta}}(\d f)\,\ex^{-\int_0^\beta w(f(s))\,\d s}.
\end{equation}
This is finite under Assumption (W), see for example \cite{BHL11}.
We write $\mu(f)$ for the integral of a function $f$ with respect to a measure $\mu$. Recall that we write ${\tt P}_{\beta,w}^{\ssup N}$ for the probability measure of a Poisson point process (PPP) $\eta=\sum_f \delta_f$  with intensity measure $\nu_{\beta, w}^{\ssup N}$ defined in \eqref{nudef}. This intensity measure has total mass 
\begin{equation}\label{totalmassnu}
\nu_{\beta, w}^{\ssup N}(\widehat \Ccal_\beta)=  \sum_{k=1}^N\frac 1k \,\oldgamma_{\beta k,w}.
\end{equation}

A standard eigenvalue expansion (see for example \cite[Theorem 4.72]{BHL11}) gives that
\begin{equation}\label{eigenvalueexp_with_xy}
\BB_{x,y}^{\ssup{\beta,w}}(\Ccal_{\beta})=
\E_x\big[\ex^{-\int_0^\beta w(B_s)\,\d s}\1\{B_\beta\in\d y\}\big]\big/\d y=\sum_{i\in\N} \ex^{-\beta\lambda_i(w)}\phi_i^{\ssup w}(x)\phi_i^{\ssup w}(y),\qquad  x,y\in\R^d,
\end{equation}
and
\begin{equation}
\oldgamma_{\beta,w}=\int_{\R^d} \BB_{x,x}^{\ssup{\beta,w}}(\Ccal_{\beta})\,\d x=\sum_{i\in\N}\ex^{-\beta \lambda_i(w)}.
\end{equation}

Driven by \eqref{Zdef}, now we 
replace $\beta$ by $\beta a$ and $w$ by $w/a$. 
We need to know, as $a\downarrow 0$, the asymptotics of $\BB_{x,y}^{\ssup{\beta a,w/a}}(\Ccal_{\beta})$ and of 
\begin{equation}\label{tjadef}
\begin{aligned}
t_{j,a}&=\oldgamma_{\beta a j,w/a}
={\tt E }_{\beta a, w/a}[\#\{f\in\eta\colon \ell(f)=j\}]
=\int_{\R^d} \BB_{x,x}^{\ssup{\beta a  j,w/a}}(\Ccal_{\beta a  j})\,\d x \\
   &=\int_{\R^d} \frac{\E_x\big[\ex^{-\frac 1a\int_0^{\beta a j}w(B_s)\,\d s}\1\{B_{\beta a j}\in\d x\}\big]}{\d x}\,\d x
=\sum_{i\in\N}\ex^{-\beta a j \lambda_i(w/a)}.
\end{aligned}
\end{equation}

We first state rescaling properties of the spectrum of $-\Delta+w/a$:

\begin{lemma}[Spectrum of $-\Delta+\smfrac wa$]\label{lem_spectralscaling}Assume that $w$ satisfies Assumption (W) with $\alpha<\infty$.
Then, as $a\downarrow0$, with $\e=a^{1/(\alpha+2)}$, 
\begin{equation}\label{eq:2.7}
    a\lambda_{i}(\smfrac wa)\sim a^{\alpha/(2+\alpha)}\lambda_i(W)
    \text{ for $i\in\{1,2\}$, and }
    \phi_1^{\ssup {w/a}}(x)\sim\phi_1^{\ssup W}(x\e^{-1})\e^{-d/2}\mbox{ in }L^2\mbox{-sense}.
\end{equation}
In particular, the spectral gap satisfies
\begin{equation}\label{spectralgapasy}
a\big[\lambda_2(\smfrac wa)-\lambda_1(\smfrac wa)\big]\sim a^{\alpha/(2+\alpha)}\big[\lambda_2(W)-\lambda_1(W)\big],
\end{equation}
and the last bracket is positive. 
\end{lemma}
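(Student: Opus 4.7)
The plan is to reduce the asymptotics to an $\eps\downarrow 0$ limit for the operator $-\Delta+W_\eps$ on $L^2(\R^d)$ via a rescaling unitary. For $\eps=a^{1/(\alpha+2)}$ (so $\eps^{\alpha+2}=a$), define $U_\eps\colon L^2(\R^d)\to L^2(\R^d)$ by $(U_\eps\phi)(y)=\eps^{d/2}\phi(\eps y)$. A direct calculation using $\eps^2 w(\eps y)/a=\eps^{-\alpha}w(\eps y)=W_\eps(y)$ gives the operator identity
\begin{equation*}
U_\eps\bigl(-\Delta+\smfrac{w}{a}\bigr)U_\eps^{-1}=\eps^{-2}\bigl(-\Delta+W_\eps\bigr),
\end{equation*}
so that $\eps^2\lambda_i(w/a)=\lambda_i(-\Delta+W_\eps)$ and $\psi_{i,\eps}:=U_\eps\phi_i^{\ssup{w/a}}$ is the $i$-th normalized eigenfunction of $-\Delta+W_\eps$. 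Multiplying by $\eps^\alpha$ and using $\eps^{\alpha+2}=a$ yields $a\lambda_i(w/a)=a^{\alpha/(\alpha+2)}\lambda_i(-\Delta+W_\eps)$. It therefore suffices to prove $\lambda_i(-\Delta+W_\eps)\to\lambda_i(W)$ for $i\in\{1,2\}$ and $\psi_{1,\eps}\to\phi_1^{\ssup W}$ in $L^2(\R^d)$; the spectral-gap assertion \eqref{spectralgapasy} follows by linearity.

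For the eigenvalue convergence I would invoke the min-max principle. The upper bound $\limsup_\eps\lambda_i(-\Delta+W_\eps)\le\lambda_i(W)$ follows by testing Rayleigh quotients against the orthonormal family $(\phi_j^{\ssup W})_{j\le i}$: each potential contribution $\int W_\eps\phi_j^{\ssup W}\phi_k^{\ssup W}\,\d y$ converges to $\int W\phi_j^{\ssup W}\phi_k^{\ssup W}\,\d y$ because $\phi_j^{\ssup W}\phi_k^{\ssup W}\in L^1$ and Assumption (W) supplies $\langle W_\eps,f\rangle\to\langle W,f\rangle$ for $f\in L^1$. For the matching lower bound, take the $i$-th eigenfunction $\psi_{i,\eps}$: the uniform estimate $\int(|\nabla\psi_{i,\eps}|^2+W_\eps\psi_{i,\eps}^2)=\lambda_i(-\Delta+W_\eps)=O(1)$ supplies $H^1$-boundedness, and the dominance $W_\eps\ge\inf_{\eps'\in(0,1]}W_{\eps'}$ combined with $\int\ex^{-\beta\inf_{\eps'}W_{\eps'}(x)}\,\d x<\infty$ yields tightness of $\psi_{i,\eps}^2$. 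Rellich--Kondrachov then produces a subsequential strong $L^2$-limit $\psi_{i,*}$ of unit norm, and lower semicontinuity of the kinetic form together with weak convergence of the potential terms gives $\int(|\nabla\psi_{i,*}|^2+W\psi_{i,*}^2)\le\liminf_\eps\lambda_i(-\Delta+W_\eps)$, closing the argument via the variational characterization of $\lambda_i(W)$.

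For the ground-state convergence, any $L^2$-subsequential limit of $\psi_{1,\eps}$ is a nonnegative normalized eigenfunction of $-\Delta+W$ at eigenvalue $\lambda_1(W)$, and Perron--Frobenius identifies this uniquely as $\phi_1^{\ssup W}$, whence the full sequence converges. The same simplicity gives $\lambda_2(W)>\lambda_1(W)$: compact resolvent of $-\Delta+W$ is ensured by the coercive growth $W(x)=|x|^\alpha W(x/|x|)\to\infty$ at infinity and the unique-minimum assumption. The main technical obstacle is the tightness step just described: one must rule out $\psi_{i,\eps}^2$ leaking to infinity as $\eps\downarrow 0$, a delicate matter because $W_\eps$ converges only weakly and its profile at scale $\eps$ may be far from that of $W$. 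The pointwise hypothesis that $\int\ex^{-\beta\inf_\eps W_\eps}\,\d x<\infty$ in Assumption (W) is exactly tailored to provide the required uniform-in-$\eps$ confinement and should be the workhorse of this step.
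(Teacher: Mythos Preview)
Your proposal is correct and follows essentially the same route as the paper: the unitary rescaling $U_\eps$ reduces everything to showing $\lambda_i(-\Delta+W_\eps)\to\lambda_i(W)$, and both you and the paper establish this via Rayleigh--Ritz upper bounds together with $H^1$-precompactness and lower semicontinuity for the lower bounds, then deduce eigenfunction convergence from uniqueness of the positive ground state. The one noteworthy difference is the tightness step: the paper bounds $\|g_\eps\1_{\R^d\setminus[-R,R]^d}\|_2^2$ directly using the homogeneity $W(Rx)=R^\alpha W(x)$ (so that $\inf_{|x|>R}W_\eps\to\infty$), whereas your argument via $V:=\inf_{\eps'}W_{\eps'}$ and $\int\ex^{-\beta V}\,\d x<\infty$ is slightly more indirect---the latter only gives $|\{V\le M\}|<\infty$ rather than $\inf_{|x|>R}V\to\infty$, so to finish you would need to combine $\int V\psi_{i,\eps}^2\le C$ with the Sobolev bound $\|\psi_{i,\eps}\|_{2^*}\le C$ (available since $d\ge 3$) to control $\int_{B_R^{\rm c}\cap\{V\le M\}}\psi_{i,\eps}^2$. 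Either way the conclusion is the same.
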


\begin{proof} Recall $W_\e(x)=\e^{-\alpha} w(x\e)$ from Assumption (W) for $\e\in(0,1]$. Then the spectra of $-\Delta+\frac wa$ and $-\Delta +W_\e$ with $\e=a^{1/(\alpha+2)}$ stand in a one-to-one correspondence with each other. Indeed, we easily see that, for any $i\in\N$, the $i$-th eigenvalue/eigenfunction pairs $(\lambda_i(w/a),h_i)$ and $(\lambda_i(W_{\e}),g_{\e,i})$ satisfy
\begin{equation}\label{spectrumrescaling}
\e^2\lambda_i(\smfrac w a)= \lambda_i(W_{\e})\qquad\mbox{and}\qquad g_{\e,i}(x)=\e^{d/2} h_i(x\e),\quad x\in\R^d.
\end{equation} 

Now we show that $\lim_{\e\downarrow 0 }\lambda_1(W_\e)=\lambda_1(W)$, which implies the first statement in \eqref{eq:2.7}. We use the Rayleigh--Ritz principle, $\lambda_1(W)=\inf_{g\in L^2(\R^d)\colon\|g\|_2=1}\langle (-\Delta+W)g,g\rangle$. Taking $g$ as the normalized principal eigenfunction of $-\Delta+W$, we get, in the limit $\e\downarrow0$,
\begin{equation}\label{eq:lambda_1}
\lambda_1(W_\e)\le\langle(-\Delta+W_\e)g,g\rangle=-\langle\Delta g,g\rangle+\langle W_\e g,g\rangle\rightarrow
-\langle\Delta g,g\rangle+\langle W g,g\rangle=\lambda_1(W).
\end{equation}

For the other direction, let
$g_\e\in L^2(\R^d)$ be the normalized eigenfunction corresponding to $\lambda_1(W_\e)$. Then for every $\e$,
\begin{align}\label{eq:211}
\lambda_1(W_\e)
=\langle(-\Delta+W_\e)g_\e,g_\e\rangle=\|\nabla g_\e\|_2^2+\langle W_\e g_\e,g_\e\rangle.
\end{align}
Since $g_\e,W_\e\ge 0$, by \eqref{eq:lambda_1}, $(\|\nabla g_{\e}\|_2)_{\e\in(0,1]}$ is bounded. Pick a sequence $(\e_n)_{n\in\N}$ with $\lim_{n \to\infty}\e_n=0$. We deduce from \cite[Theorem 2.18, Theorem 8.6]{LL01} that there is some $g_0\in L^2(\R^d)$ such that, along some subsequence that we still denote $(\e_n)_n$,
\begin{align}
\nabla g_{\e_n}\rightarrow \nabla g_0,  \qquad g_{\e_n}\rightarrow g_0, \text{ weakly in }L^2; \qquad g_{\e_n}\1_{[-R,R]^d}\rightarrow g_0\1_{[-R,R]^d}\text{ strongly in }L^2 \mbox{ for any }R>0.
\end{align}
Since $g_{\e_n}$ is continuous for any $n$, we furthermore have that $g_{\e_n}$ converges almost everywhere to $g_0$, \cite[Corollary 8.7]{LL01}.
By Fatou's lemma and \cite[Theorem 2.11]{LL01} (lower semi-continuity of $g\mapsto\|\nabla g\|_2^2$),
\begin{align}\label{eq:lambda_3}
\liminf_{n\rightarrow\infty}\lambda_1(W_{\e_n})=\liminf_{n\rightarrow\infty}(\|\nabla g_{\e_n}\|_2^2+\langle W_{\e_n}g_{\e_n},g_{\e_n}\rangle)
\ge\|\nabla g_0\|_2^2+\langle Wg_0,g_0\rangle.
\end{align}

Moreover, 
\begin{align}\label{eq:g_norm}
\|g_0\|_2=\lim_{R\rightarrow\infty}\|g_0\1_{[-R,R]^d}\|_2=\lim_{R\rightarrow\infty}\lim_{n\rightarrow\infty}\|g_{\e_n}\1_{[-R,R]^d}\|_2
\ge1-\lim_{R\rightarrow\infty}\lim_{n\rightarrow\infty}\|g_{\e_n}\1_{\R^d\setminus[-R,R]^d}\|_2.
\end{align}
As $R\rightarrow\infty$, by \eqref{eq:lambda_1} and \eqref{eq:211},
\begin{align*}
\|g_{\e_n}\1_{\R^d\setminus[-R,R]^d}\|_2^2
\le\frac{1}{\inf_{\R^d\setminus[-R,R]^d}W_{\e_n}}\int W_{\e_n}g_{\e_n}^2\d x
\le\frac{\lambda_1(W)}{\inf_{\R^d\setminus[-R,R]^d}W_{\e_n}}
\rightarrow \frac{\lambda_1(W)}{\inf_{\R^d\setminus[-R,R]^d}W}.
\end{align*}
By Assumption (W), we know $W(R x)=R^\alpha W(x)$ and $\inf_{\R^d\setminus[-1,1]^d}W>0$,
so
\begin{align*}
\lim_{R\rightarrow\infty}\lim_{n\rightarrow\infty}\|g_{\e_n}\1_{\R^d\setminus[-R,R]^d}\|_2^2
\le \lim_{R\rightarrow\infty}\frac{\lambda_1(W)}{\inf_{\R^d\setminus[-R,R]^d}W}=0.
\end{align*}
Put this into \eqref{eq:g_norm}, we deduce that
\[
\|g_0\|_2\ge 1.
\]

Hence, the right-hand side of \eqref{eq:lambda_3} is not smaller than $\lambda_1(W)$. Together with \eqref{eq:lambda_1}, this implies that $\lim_{\e\downarrow0}\lambda_1(W_\e)=\lambda_1(W)$, as announced.

A slight extension of the above proof also shows that $\e^{d/2} \phi_1^{\ssup{w/a}}(x \e)$ converges uniformly on compacts towards $\phi_1^{\ssup W}$ as $\e\to 0$. In the same way, we can also show that the same is true for the second eigenvalue, based on the Rayleigh--Ritz formula $\lambda_2(W)=\inf_{g\in L^2(\R^d)\colon\|g\|_2=1, g\perp \phi_1}\langle (-\Delta+W)g,g\rangle$, where we denote the principal eigenfunction of $-\Delta+W$ by $\phi_1$. 

Finally, by \cite[Theorem 4.72, Theorem 4.125]{BHL11}, we have $\lambda_2(W)>\lambda_1(W)>0$.
\end{proof}

\begin{lemma}[Asymptotics of $t_{j,a}$]\label{lem-tasy}Assume that $w$ satisfies Assumption (W). For $\alpha=\infty$, read $\alpha/(\alpha+2)$ as $1$.
\begin{enumerate}
\item There is a $C\in(0,\infty)$ such that, for any $j\in\N$ and $a\in(0,\infty)$,
 \begin{equation}
 t_{j,a}\leq (4\pi\beta a j)^{-d/2}\int_{\R^d} \ex^{-\beta j w(x)}\d x\le C a^{-d/2}j^{-d/2-d/\alpha}\, .
 \end{equation}
\item If $a\in(0,1]$ and $j\in\N$, in the limit as $ja^{\alpha/(2+\alpha)}\to0$ (and $j\to\infty$ for the second expression),
\begin{equation}
t_{j,a}\sim (4\pi\beta a j)^{-d/2}\int_{\R^d} \ex^{-\beta j w(x)}\d x\sim (4\pi\beta a j^{1+2/\alpha})^{-d/2}\int_{\R^d} \ex^{-\beta W(x)}\,\d x\, .
\end{equation}

\item There is a $c\in(0,\infty)$ such that, as $ja^{\alpha/(2+\alpha)}\to\infty$, for any two test functions $f,g\in L^2(\R^d)$, (possibly depending on $a$, but with bounded norms),
\begin{equation}\label{xiasy}
\begin{aligned}
\int_{\R^d}\d x\int_{\R^d}\d y\, f(x) g(y)\BB_{x,y}^{\ssup{\beta j a ,w/a}}(\Ccal_{\beta j a})
&=\ex^{-\beta j a\lambda_1(w/a) }\langle f, \phi_1^{\ssup {w/a}}\rangle\,\langle g,\phi_1^{\ssup {w/a}}\rangle\,\big(1+\eps(ja^{\alpha/(2+\alpha)})\big),
\end{aligned}
\end{equation}
provided that $\langle f, \phi_1^{\ssup {w/a}}\rangle\,\langle g,\phi_1^{\ssup {w/a}}\rangle\not=0$, where the error term satisfies $\eps(k)=\Ocal(k^{-\frac d2\,\frac {\alpha+2}\alpha})\ex^{-ck}$ as $k\to\infty$. In particular,
\begin{equation}\label{tasy_harmonic}
t_{j,a}=\ex^{-\beta j a\lambda_1(w/a) }\,\big(1+\eps(ja^{\alpha/(2+\alpha)})\big)\, .
\end{equation}
\end{enumerate}
\end{lemma}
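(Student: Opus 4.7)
My plan is to apply the Golden--Thompson trace inequality to the self-adjoint operator $H=-\Delta+w/a$ at $t=\beta aj$, which immediately gives
$$
t_{j,a}=\Tr\ex^{-tH}\le\Tr\bigl(\ex^{t\Delta}\ex^{-tw/a}\bigr)=\int g_{\beta aj}(x,x)\ex^{-\beta jw(x)}\,\d x=(4\pi\beta aj)^{-d/2}\int\ex^{-\beta jw(x)}\,\d x,
$$
i.e.\ the first inequality in~(1). For the polynomial bound I would substitute $x=\e y$ with $\e=j^{-1/\alpha}\in(0,1]$; using $w(\e y)=\e^\alpha W_\e(y)$ this turns the integral into $\e^d\int\ex^{-\beta W_\e(y)}\,\d y$, which Assumption~(W) bounds uniformly in $\e$ via the majorant $\ex^{-\beta\inf_{\e'\in(0,1]}W_{\e'}}$, yielding $\int\ex^{-\beta jw(x)}\,\d x\le Cj^{-d/\alpha}$. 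The $\alpha=\infty$ case (reading $d/\alpha=0$) uses simply $\ex^{-\beta jw}\le\ex^{-\beta w}$ for $j\ge1$.

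The matching lower bound required for Part~(2) would follow from short-time heat-trace asymptotics after the canonical rescaling of Lemma~\ref{lem_spectralscaling}. Substituting $y=x/\e$ with $\e=a^{1/(\alpha+2)}$ produces $-\Delta_x+w(x)/a=\e^{-2}(-\Delta_y+W_\e(y))$, so the trace becomes
$$
t_{j,a}=\Tr\ex^{-T'(-\Delta+W_\e)},\qquad T'=\beta j\e^\alpha=\beta k,\quad k=ja^{\alpha/(\alpha+2)},
$$
and the regime $k\downarrow0$ is exactly the classical short-time limit $T'\downarrow0$. The standard asymptotic $\Tr\ex^{-T'H_\e}\sim(4\pi T')^{-d/2}\int\ex^{-T'W_\e(y)}\,\d y$ follows from the Feynman--Kac representation (the bridge spread at most $\sqrt{T'}/2$ forces $\E_{y,y}^{T'}[\ex^{-\int_0^{T'}W_\e(B_s)\d s}]$ to converge pointwise to $\ex^{-T'W_\e(y)}$, with the Golden--Thompson bound as integrable majorant), and reverting to $x$-coordinates via $T'W_\e(y)=\beta jw(x)$ and $(4\pi T')^{-d/2}\e^d=(4\pi\beta aj)^{-d/2}$ produces the first $\sim$ in~(2). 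The second $\sim$ is purely analytic: for $\alpha<\infty$ the substitution $x=yj^{-1/\alpha}$ turns $\int\ex^{-\beta jw(x)}\,\d x$ into $j^{-d/\alpha}\int\ex^{-\beta W_{1/j^{1/\alpha}}(y)}\,\d y$, and the weak convergence $W_{1/j^{1/\alpha}}\to W$ from Assumption~(W) with dominating function $\ex^{-\beta\inf_\e W_\e}$ gives the limit. For $\alpha=\infty$ use $\ex^{-\beta jw}\to\1_{\{w=0\}}$ by dominated convergence.

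\textbf{Part (3).} Here I would expand spectrally
$$
\int\!\!\int f(x)g(y)\,\BB_{x,y}^{\ssup{\beta ja,w/a}}(\Ccal_{\beta ja})\,\d x\,\d y=\sum_{i\in\N}\ex^{-\beta ja\lambda_i(w/a)}\,\langle f,\phi_i^{\ssup{w/a}}\rangle\,\langle g,\phi_i^{\ssup{w/a}}\rangle,
$$
isolate the $i=1$ term, and bound the remainder $R$ by $\|f\|_2\|g\|_2\sum_{i\ge2}\ex^{-\beta ja\lambda_i(w/a)}$ via $|\langle f,\phi_i\rangle\langle g,\phi_i\rangle|\le\|f\|_2\|g\|_2$. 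The H\"older-type split, valid for any $\theta\in(0,1)$,
$$
\sum_{i\ge2}\ex^{-\beta ja\lambda_i(w/a)}\le\ex^{-(1-\theta)\beta ja\lambda_2(w/a)}\,\Tr\ex^{-\theta\beta jaH},
$$
combined with Part~(1) applied to the second factor (with $\beta$ replaced by $\theta\beta$) gives a bound of order $\theta^{-d/2-d/\alpha}a^{-d/2}j^{-d/2-d/\alpha}\ex^{-(1-\theta)\beta ja\lambda_2(w/a)}$. Dividing by $\ex^{-\beta ja\lambda_1(w/a)}$, invoking Lemma~\ref{lem_spectralscaling} to identify $\beta ja\lambda_i(w/a)\sim\beta k\lambda_i(W)$ for $i=1,2$, and using the algebraic identity $a^{-d/2}j^{-d/2-d/\alpha}=k^{-d/2-d/\alpha}$ yields
$$
\frac{|R|}{\ex^{-\beta ja\lambda_1(w/a)}|\langle f,\phi_1^{\ssup{w/a}}\rangle\langle g,\phi_1^{\ssup{w/a}}\rangle|}\le C\,k^{-d/2-d/\alpha}\ex^{-\beta k[(1-\theta)\lambda_2(W)-\lambda_1(W)]}.
$$
Choosing any $\theta\in(0,\,1-\lambda_1(W)/\lambda_2(W))$, which is possible since $\lambda_2(W)>\lambda_1(W)$ by the spectral gap in Lemma~\ref{lem_spectralscaling}, makes the exponent strictly negative and produces the claimed error $\eps(k)=\Ocal(k^{-d/2-d/\alpha})\ex^{-ck}$. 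The special case $t_{j,a}=\ex^{-\beta ja\lambda_1(w/a)}(1+\eps(k))$ follows from the identical split applied directly to $\sum_{i\ge 2}\ex^{-\beta ja\lambda_i(w/a)}$ (without the inner products).

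\textbf{Main obstacle.} The delicate step is the uniformity of the short-time heat-trace asymptotic in Part~(2): Assumption~(W) only provides weak convergence $W_\e\to W$, so after the rescaling I must combine pointwise continuity of $W_\e$ on compacts with a uniform tail estimate coming from the integrability of $\ex^{-\beta\inf_\e W_\e}$, and verify that the two regimes (localized bridge on compacts versus exponentially damped tail) interpolate consistently as $T'\downarrow 0$.
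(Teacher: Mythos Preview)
Your proposal is correct and the overall architecture matches the paper's, but there are a couple of stylistic differences worth flagging.

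\textbf{Part (1).} You invoke the Golden--Thompson trace inequality; the paper obtains the identical bound via Jensen applied to the Feynman--Kac representation of the diagonal kernel (conditioning on the bridge path, averaging $w(B_s)$ over $s\in[0,\beta aj]$, then using convexity of the exponential). These are two faces of the same coin---Golden--Thompson is the operator-theoretic abstraction, the paper's argument is its path-integral unpacking. Both give the first inequality; the polynomial bound via the substitution $x\mapsto xj^{-1/\alpha}$ and the uniform majorant $\ex^{-\beta\inf_\e W_\e}$ is identical in both.

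\textbf{Part (2).} Here the routes diverge more visibly. You rescale spatially with $\e=a^{1/(\alpha+2)}$, reducing to $\Tr\ex^{-T'(-\Delta+W_\e)}$ with $T'=\beta k\to0$, and then appeal to short-time heat-trace asymptotics. The paper instead works directly in $x$-coordinates: for the lower bound it restricts the spatial integral to $\{|x|<Mj^{-1/\alpha}\}$, applies Jensen's inequality $\E[\ex^{-X}]\ge\ex^{-\E[X]}$ to the normalized bridge expectation, and then shows that the resulting exponent $\overline\BB_{0,0}^{\ssup{\beta aj}}[w(x+B_s)-w(x)]$ vanishes after the substitution $y=xj^{1/\alpha}$, because the bridge perturbation $j^{1/\alpha}\sqrt{\beta aj}=(ja^{\alpha/(\alpha+2)})^{(\alpha+2)/(2\alpha)}\to0$. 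This sidesteps precisely the uniformity issue you flag: rather than needing ``short-time asymptotics uniform in a drifting potential $W_\e$'', the paper isolates a single vanishing quantity and lets $M\to\infty$ afterward. Your rescaling is equivalent (composing yours with the further substitution $z=y(T')^{1/\alpha}$ recovers the paper's $xj^{1/\alpha}$), but the paper's presentation is more self-contained.

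\textbf{Part (3).} Essentially the same argument. Your H\"older split with parameter $\theta$ is the paper's split with parameter $\delta=\theta$; the paper first proves the trace version \eqref{tasy_harmonic} and then handles the bilinear form via Cauchy--Schwarz plus Parseval (bounding $\sum_{i\ge2}\ex^{-\beta aj\delta\lambda_i}|\langle f,\phi_i\rangle\langle g,\phi_i\rangle|\le\|f\|_2\|g\|_2$), whereas you bound each $|\langle f,\phi_i\rangle\langle g,\phi_i\rangle|\le\|f\|_2\|g\|_2$ termwise and sum the trace---both yield the stated error.
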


\begin{proof}
The case $\alpha=\infty$ and $a\downarrow 0$ is basically identical with the situation in \cite[Lemma 2.2]{KVZ23}, we the case of $W=\infty\1_{(LU)^{\rm c}}$ is handled with various kinds of boundary conditions (including Dirichlet zero conditions), with $U$ a centred box and $L\in(0,\infty)$ tending to $\infty$. An extension from a box $U$ to the set $\{W=0\}$ under Assumption (W) is clearly no problem. Use the Brownian scaling property to see that the limit as $a\downarrow0$ with fixed $W$ (instead of $w/a$; recall that $W$ takes only values in $\{0,\infty\}$) is equivalent to this limit as $L\to\infty$. The replacement of $W$ by $W_\eps$ with $W_\e\to W$ as $\e\downarrow 0$ for $a\downarrow 0$  is only a minor technical point.  The case where $\alpha=\infty$ and $a\in(0,\infty)$ is fixed is even easier to prove; we leave the details to the reader.

Hence, we assume that $\alpha\in(0,\infty)$.

(1) Conditional on the Brownian motion $B$, apply Jensen's inequality for the probability measure $ \frac 1{\beta a j}\int_0^{\beta a j}\d s$ and the negative-exponential map, we get
\begin{equation}\label{Proof1}
\begin{aligned}
t_{j,a}&=(4\pi\beta a j)^{-d/2}\int_{\R^d}\d x\,\E_x\Big(\ex^{-\beta j\frac 1{\beta a j}\int_0^{\beta a j}w(B_s)\,\d s}\Big|B_{\beta a j}=x\Big)\\
&\leq (4\pi\beta a j)^{-d/2}\int_{\R^d}\d x\,\frac 1{\beta a j}\int_0^{\beta a j}\d s\,
\E_x\Big(\ex^{-\beta j w(B_s)}\Big|B_{\beta a j}=x\Big)\\
&=\int_{\R^d}\d y\,\ex^{-\beta j w(y)}\frac 1{\beta a j}\int_0^{\beta a j}\d s\, \int_{\R^d}\d x\,p_s(x-y)p_{\beta a j-s}(y- x)\\
&=\int_{\R^d}\d y\,\ex^{-\beta j w(y)}\frac 1{\beta a j}\int_0^{\beta a j}\d s\,p_{\beta a j}(0)=(4\pi\beta a  j)^{-d/2} \int \ex^{-\beta j w(x)}\d x,
\end{aligned}
\end{equation}
where we used the Gaussian density $p_t$ with variance $2t$ and used their convolution property. Furthermore, recalling $W_\eps(x)=\eps^{-\alpha} w(x\eps)$, we see, making a change of variables $y=x j^{-1/\alpha}$, that
\begin{equation}\label{boundnegexpofw}
\int\ex^{-\beta j w}\d x=j^{-d/\alpha}\int_{\R^d} \ex^{-\beta j w(xj^{-1/\alpha})}\,\d x=j^{-d/\alpha}\int \ex^{-\beta  W_{j^{-1/\alpha}}}\d x
\leq j^{-d/\alpha}\int \ex^{-\beta \inf_{\eps\in(0,1]}W_\eps}\d x,
\end{equation}
which is finite, according to Assumption (W).


(2) The upper bound follows from (1), in particular \eqref{boundnegexpofw}, which makes it possibly to carry out the limit as $j\to\infty$ under the integral, by the virtue of the bounded convergence theorem.

We turn to the proof of the lower bound. We write $\overline\BB_{x,y}^{\ssup\beta}=\BB_{x,y}^{\ssup\beta}/\BB_{x,y}^{\ssup\beta}(\Ccal_\beta)$ for the normalized version of the Brownian bridge measure. Pick a large $M$, then, by Jensen's inequality, Brownian scaling, and a change of variables $y= x j^{1/\alpha}$ and $r=s\beta a j$,
\begin{align*}
      (4\pi\beta a j)^{d/2}t_{j,a}
      &\ge \int_{\abs{x}<Mj^{-1/\alpha}}\ex^{-\beta j w(x)}\exp\rk{-\frac{1}{a}\int_0^{\beta a j}\overline\BB_{0,0}^{\ssup{\beta a j}}\ek{w(x+B_s)-w(x)}\,\d s}\,\d x\\
      &= \int_{\abs{x}<Mj^{-1/\alpha}}\ex^{-\beta j w(x)}
      \exp\Big(-\beta \int_0^1\d r\, \overline\BB_{(0,0)}^{\ssup 1}\big(W_{j^{-1/\alpha}}(y+j^{1/\alpha}B_r\sqrt{\beta a j})-W_{j^{-1/\alpha}}(y)\big)\Big).
\end{align*}
Observe that 
the $B_r$-depending term in the argument of $W_{j^{-1/\alpha}}$ vanishes, since $j^{1/\alpha}\sqrt{aj}=(a^{\alpha/(\alpha+2)} j)^{(\alpha+2)/2\alpha}$, which vanishes, according to our assumption. Together with the fact that $(W_\eps)_\e$ converges on compact sets, the integrand in the $r$-integral vanishes in this limit, uniformly in $y$ on the integration area, and we have the first result by taking $M\rightarrow\infty$.
Finally, note that 
\[
      \int\ex^{-\beta j w(x)}\d x
      = j^{-d/\alpha}\int \ex^{-\beta W_{j^{-1/\alpha}}(y)}\d y,
\]
we have the second conclusion by taking $j\rightarrow\infty$.

(3) 
We rely on the eigenvalue expansion in \eqref{eigenvalueexp_with_xy} and use the small-$a$ asymptotics of spectral gap from Lemma~\ref{lem_spectralscaling}, which allows us to replace the entire sum by the first summand only. 

By Jensen's inequality, we find that for every $t>0$,
\begin{equation}
\begin{aligned}
     \sum_{i\in\N }\ex^{-t \lambda_i(w)}
     &\leq \frac 1t\int_0^t\d s\, \int_{\R^d}\d x\, \E_x\big[\ex^{-\beta w(B_s)} \1\{B_t\in\d x\}\big]/\d x \\
    & =\frac 1t\int_0^t\d s\, \int_{\R^d}\d x\,\int_{\R^d}\d y\,g_s(x,y)\ex^{-\beta w(y)}g_{t-s}(y,x)(4\pi\beta t)^{-d/2}
     \le C t^{-d/2}\int_{\R^d}\ex^{-t w(x)}\d x\, ,
    \end{aligned}
\end{equation}
where we used the convolution property of the Gaussian kernel $g_s(x,y)$ with variance $2s$.
This implies that for any $a\in(0,1]$, $j\in\N$ and $\delta\in(0,1)$,
\begin{equation}
\begin{aligned}
    \sum_{i\ge 2}\ex^{-\beta a j\lambda_i(w/a)}
    &\le \ex^{-\beta a j\l_2(w/a)(1-\delta)}\sum_{i\ge 2}\ex^{-\beta aj\delta\l_i(w/a)}\\
    &\le \ex^{-\beta a j\l_2(w/a)(1-\delta)} C(aj\delta)^{-d/2}\int\ex^{-\beta  j \delta w}\\
    &\leq\ex^{-\beta aj\l_2(w/a)(1-\delta)} C_\delta \big(j\,a^{\alpha/(\alpha+2)}\big)^{-\frac d2\,\frac {\alpha+2}\alpha}\, ,
     \end{aligned}
\end{equation}
where $C_\delta$ depends only on $\beta$ and $\delta$, and  the last step used also the second assertion in (1). Hence,
$$
\begin{aligned}
t_{j,a}&=\sum_{i\in \N}\ex^{-{
    \beta aj}\lambda_i(w/a)}
    \leq \ex^{-{
    \beta aj}\lambda_1(w/a)}\Big(1+\ex^{{
    \beta aj}\lambda_1(w/a)}\ex^{-\beta aj\l_2(w/a)(1-\delta)} C_\delta \big(j\,a^{\alpha/(\alpha+2)}\big)^{-\frac d2\,\frac {\alpha+2}\alpha}\Big).
    \end{aligned}
    $$
    Now, by \eqref{spectralgapasy}, we can pick $\delta$ so small that, for some $c>0$, the product of the two exponentials is not larger than $\ex^{- c a^{\alpha/(\alpha+2)}}$ in the limit that we consider. This implies \eqref{tasy_harmonic}.

The proof of \eqref{xiasy} is based on the preceding and on the Cauchy--Schwarz inequality and Parseval's identity as follows:
$$
\sum_{i\geq 2}\ex^{-\beta a j\delta \lambda_i(w/a)}\big|\langle f, \phi_i^{\ssup {w/a}}\rangle\,\langle g,\phi_i^{\ssup {w/a}}\rangle\big|
\leq \Big(\sum_{i\in\N}\langle f, \phi_i^{\ssup {w/a}}\rangle^2\Big)^{1/2}\Big(\sum_{i\in\N}\langle g, \phi_i^{\ssup {w/a}}\rangle^2\Big)^{1/2}
=\|f\|_2\,\|g\|_2.
$$
This bound is also sufficient if $f$ or $g$ depend on $a$, but have norms that are bounded in $a$.
\end{proof}

We can immediately draw a conclusion for the expected numbers of number of particles in the PPP. Recall that ${\mathfrak N}(\eta)$ is the number of particles in a PPP $\eta$, and  the number $\rho_w=\sum_{j\in\N}(4\pi\beta j)^{-d/2}\int \ex^{-\beta j w}\d x$. Now, in the case that $\lim_{N\to\infty}a_N=0$, we introduce the threshold 
\begin{equation}\label{TNdef}
T_N=\big\lfloor a_N^{-\alpha/(\alpha+2)}\big(\log\smfrac 1{a_N}\big)^{1/2}\big\rfloor,\qquad N\in\N,
\end{equation}
while we put $T_N=\lfloor (\log N)^{1/2}\rfloor$ in the case that $(a_N)_{N\in\N}$ is bounded, but does not vanish. Note that $1\ll T_N\leq N^{\frac 2d\,\frac\alpha{\alpha+2}+o(1)}$. Then
 ${\mathfrak N}^{\ssup{\rm short}}(\eta)=\sum_{k\leq T_N} \sum_{f\in\eta\colon \ell(f)=k}\ell(f)$ denotes the number of particles in loops of lengths $\leq T_N$ in the PPP $\eta$, which we call the  {\em short loops}. The other loops are called {\em long}, and ${\mathfrak N}^{\ssup{\rm long}}(\eta)=\sum_{k=1+ T_N}^N \sum_{f\in\eta\colon \ell(f)=k}\ell(f)$ is the number of particles in long loops.

\begin{corollary}\label{cor:N_short}
As $N\to\infty$, ${\tt E}_{\beta a_N,w/a_N}^{\ssup N}({\mathfrak N})\sim \rho_w a_N^{-d/2}$ and ${\tt E}_{\beta a_N,w/a_N}^{\ssup N}({\mathfrak N}^{\ssup{\rm short}})\sim \rho_w a_N^{-d/2}$.
\end{corollary}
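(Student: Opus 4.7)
The plan is to reduce both expectations to partial sums of $(t_{k,a_N})_{k}$ and then apply dominated convergence on the counting measure, with the summable dominating sequence $b_k := (4\pi\beta k)^{-d/2}\Wj_k$, noting that $\sum_k b_k = \rho_w < \infty$ by hypothesis.

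First I would use that under ${\tt P}^{\ssup N}_{\beta a_N, w/a_N}$ the counts $X_1,\dots,X_N$ are independent Poisson with parameter $t_{k,a_N}/k$, so ${\tt E}^{\ssup N}_{\beta a_N, w/a_N}(kX_k) = t_{k,a_N}$, which gives
\[
{\tt E}^{\ssup N}_{\beta a_N, w/a_N}({\mathfrak N}) = \sum_{k=1}^N t_{k,a_N}, \qquad {\tt E}^{\ssup N}_{\beta a_N, w/a_N}({\mathfrak N}^{\ssup{\rm short}}) = \sum_{k=1}^{T_N} t_{k,a_N}.
\]
The Jensen-based upper bound in Lemma~\ref{lem-tasy}(1) then yields $a_N^{d/2} t_{k,a_N} \leq b_k$ uniformly in $k$ and $N$, supplying the required dominant.

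Next, restricting to the nontrivial regime $a_N\to 0$, for each fixed $k\in\N$ the product $k a_N^{\alpha/(\alpha+2)}$ tends to $0$ (since $\alpha>0$ under Assumption~(W)), so Lemma~\ref{lem-tasy}(2) provides the pointwise limit $a_N^{d/2} t_{k,a_N}\to b_k$. Combined with $T_N\to\infty$, the truncated summand $a_N^{d/2} t_{k,a_N}\1\{k\leq T_N\}$ also converges to $b_k$ for each fixed $k$ and remains bounded by $b_k$. Dominated convergence on the counting measure then delivers both $a_N^{d/2}{\tt E}^{\ssup N}_{\beta a_N, w/a_N}({\mathfrak N})\to\rho_w$ and $a_N^{d/2}{\tt E}^{\ssup N}_{\beta a_N, w/a_N}({\mathfrak N}^{\ssup{\rm short}})\to\rho_w$, which is the claim.

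Notably, no use of the spectral-gap asymptotics in Lemma~\ref{lem-tasy}(3) is needed: the entire tail contribution from long loops is automatically absorbed by the summable dominant $b_k$ through the crude Jensen bound of Lemma~\ref{lem-tasy}(1). The only mild care point is verifying that truncating at the diverging threshold $T_N$ rather than at a fixed cutoff does not break the dominated convergence step, which is immediate from $T_N\to\infty$ forcing $\1\{k\leq T_N\}\to 1$ pointwise in $k$. There is therefore no real obstacle; the argument is essentially one clean application of Fatou/dominated convergence once the formulas for the two expectations have been written down.
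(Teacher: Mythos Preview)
Your proof is correct and essentially the same as the paper's. The paper writes the expectation as $\sum_{j=1}^N t_{j,a}$, uses Lemma~\ref{lem-tasy}(1) for the upper bound and Lemma~\ref{lem-tasy}(2) together with a fixed truncation $j\le M$ followed by $M\to\infty$ for the lower bound; you package the identical ingredients as a single dominated convergence argument on the counting measure, which is a cleaner way of saying the same thing.
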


\begin{proof}
Note that  ${\tt E}_{\beta a,w/a}^{\ssup N}({\mathfrak N})=\sum_{j=1}^N t_{j,a}$. Now the lower bound is shown by restricting the sum to $j\leq M$ for some $M\in\N$, using the asymptotics of Lemma~\ref{lem-tasy}(2) and making $M \to\infty$ afterwards. The upper bound directly follows from Lemma~\ref{lem-tasy}(1). The same applies when cutting the $k$-sum at $T_N$, since $T_N\to\infty$. 
\end{proof}

\subsection{Concentration inequalities}\label{sec-conc}

Next, we prove a concentration inequality for the number of particles and for the number of particles in short loops in the configuration of the PPP. We write $[x]_+$ for the positive part of $x\in\R$.

\begin{proposition}\label{prop:2.6}
Assume that $(a_N)_N$ is a bounded sequence in $(0,\infty)$ and that $w$ satisfies Assumption (W). Then for any  $k\in(0,\infty)$ (possibly depending on $N$), in the limit as $N\to\infty$, the following holds.
\begin{enumerate}
    \item If $\kappa<\beta \l_1(W)$, then 
    \begin{equation}
       \log {\tt P}_{\beta a_N,w/a_N}^{\ssup N}\big(\big|{\mathfrak N}-\Ett_{\beta a_N,w/a_N}^{\ssup N}[{\mathfrak N}]\big|>k\big)\le - \kappa ka_N^{\frac\alpha{\alpha+2}}+a_N^{-[\frac d2-\frac{2\alpha}{\alpha+2}]_++o(1)}.
    \end{equation}
    \item For any $\kappa>0$,
    \begin{equation}\label{shortloopsconc}
       \log {\tt P}_{\beta a_N,w/a_N}^{\ssup N}\big(\big|\Ns-\Ett_{\beta a_N,w/a_N}^{\ssup N}\ek{\Ns}\big|> k\big)
        \le -\kappa k a_N^{\frac\alpha{\alpha+2}}+a_N^{-[\frac d2-\frac{2\alpha}{\alpha+2}]_++o(1)}
         \end{equation}
    In particular, pick $k=k_N\gg a_N^{-[\frac d2-\frac{2\alpha}{\alpha+2}]_+-\frac \alpha{\alpha+2}+o(1)}$, then the first terms on the right-hand sides dominate, and we obtain a stretched-exponentially decay.
\end{enumerate}
\end{proposition}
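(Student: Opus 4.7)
The plan is an exponential Chebyshev (Chernoff) bound on a Poissonized sum. Under $\Ptt_{\beta a_N, w/a_N}^{\ssup N}$ the variable $\mathfrak{N} = \sum_{k=1}^N k X_k$ is a linear combination of independent $X_k \sim \mathrm{Poi}(t_{k,a_N}/k)$, so the centred cumulant generating function factorises as
\[
\log \Ett_{\beta a_N, w/a_N}^{\ssup N}\!\bigl[\ex^{\theta(\mathfrak{N} - \Ett\mathfrak{N})}\bigr]
 = \sum_{j=1}^N \tfrac{t_{j,a_N}}{j}\bigl(\ex^{\theta j} - 1 - \theta j\bigr),
\]
and Markov's inequality provides for both tails simultaneously
\[
\Ptt\bigl(|\mathfrak{N} - \Ett\mathfrak{N}| > k\bigr)
\leq 2\exp\Bigl(-\theta k + \sum_{j=1}^N \tfrac{t_{j,a_N}}{j}\bigl(\ex^{\theta j} - 1 - \theta j\bigr)\Bigr),
\]
where the lower-tail estimate (at parameter $-\theta$) is dominated by the upper because $\ex^{-\theta j} - 1 + \theta j \leq \ex^{\theta j} - 1 - \theta j$ for $\theta, j \geq 0$ (by $\sinh(x) \geq x$). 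I would choose $\theta = \kappa a_N^{\alpha/(\alpha+2)}$, which produces the announced linear term $-\kappa k a_N^{\alpha/(\alpha+2)}$, and reduce to showing the cumulant remainder is at most $a_N^{-[d/2 - 2\alpha/(\alpha+2)]_+ + o(1)}$.

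The sum splits naturally at $j^\star = \lfloor 1/\theta \rfloor \asymp a_N^{-\alpha/(\alpha+2)}$. On $\{j \leq j^\star\}$, where $\theta j \leq 1$, the Taylor bound $\ex^{\theta j} - 1 - \theta j \leq C(\theta j)^2$ combined with the non-asymptotic estimate $t_{j,a_N} \leq C a_N^{-d/2} j^{-d/2 - d/\alpha}$ from Lemma~\ref{lem-tasy}(1) yields
\[
\sum_{j \leq j^\star} \tfrac{t_{j,a_N}}{j}\bigl(\ex^{\theta j} - 1 - \theta j\bigr)
\;\leq\; C\theta^2 a_N^{-d/2} \sum_{j \leq j^\star} j^{1 - d/2 - d/\alpha}
\;=\; \Ocal\bigl(a_N^{2\alpha/(\alpha+2) - d/2 + o(1)}\bigr),
\]
where the partial sum is either convergent (when $d/2+d/\alpha>2$), logarithmic (borderline), or polynomially large (subcritical), and in every case fits into the $a_N^{o(1)}$ factor. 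On $\{j > j^\star\}$ the sharp asymptotic of Lemma~\ref{lem-tasy}(3) applies because $j a_N^{\alpha/(\alpha+2)} \to \infty$, giving $t_{j,a_N} \sim \ex^{-\beta a_N j \lambda_1(w/a_N)}$ with $\beta a_N \lambda_1(w/a_N) \sim \beta \lambda_1(W) a_N^{\alpha/(\alpha+2)}$ by Lemma~\ref{lem_spectralscaling}; under the strict hypothesis $\kappa < \beta \lambda_1(W)$ the quantity $\ex^{\theta j} t_{j,a_N}/j$ decays geometrically in $j$ with ratio $\ex^{-c a_N^{\alpha/(\alpha+2)}}$ for some $c>0$, so the partial sum is $o(1)$ and subdominant.

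Part (2) for $\mathfrak{N}^{\mathrm{short}}$ is a truncated version of the same computation. Since the sum is already cut at $T_N = \lfloor a_N^{-\alpha/(\alpha+2)} (\log 1/a_N)^{1/2} \rfloor$, the large-$j$ regime (where $\kappa < \beta \lambda_1(W)$ was needed) disappears: any $\kappa > 0$ suffices because $\ex^{\theta j} \leq \ex^{\theta T_N} = \ex^{\kappa(\log 1/a_N)^{1/2}} = a_N^{-o(1)}$ is a uniformly subpolynomial prefactor, and the sharper bound $\ex^{\theta j} - 1 - \theta j \leq \tfrac12(\theta j)^2 \ex^{\theta j}$ gives the identical remainder estimate $a_N^{-[d/2 - 2\alpha/(\alpha+2)]_+ + o(1)}$ via the same Lemma~\ref{lem-tasy}(1) input. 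The main obstacle is the bookkeeping of the various $a_N^{o(1)}$ factors, namely the polylog partial sums on $\{j \leq j^\star\}$, the $(1+o(1))$ in Lemma~\ref{lem-tasy}(3), and $\ex^{\theta T_N}$ in part (2), and in particular verifying absorption at the borderline $d/2 = 2\alpha/(\alpha+2)$ where the small-$j$ partial sum carries an explicit logarithm in $a_N$.
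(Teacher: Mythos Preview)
Your approach is essentially the paper's: exponential Chebyshev with $\theta=\kappa a_N^{\alpha/(\alpha+2)}$, a small-$j$/large-$j$ split of the cumulant sum, Lemma~\ref{lem-tasy}(1) on the small side and Lemma~\ref{lem-tasy}(3) on the large side. There is one small gap: your split at $j^\star=\lfloor 1/\theta\rfloor$ does not give $j\,a_N^{\alpha/(\alpha+2)}\to\infty$ for $j$ just above $j^\star$ (there the product is $\approx 1$), so the asymptotic of Lemma~\ref{lem-tasy}(3) is not available at the bottom of your large-$j$ range. The paper avoids this by splitting instead at $T_N=\lfloor a_N^{-\alpha/(\alpha+2)}(\log\tfrac1{a_N})^{1/2}\rfloor$ and using the inequality $\ex^x-1-x\le x^2\ex^x$ (valid for all $x>0$) on $j\le T_N$; the extra factor $\ex^{\theta T_N}=\ex^{\kappa(\log 1/a_N)^{1/2}}=a_N^{-o(1)}$ is absorbed into the stated error, and for $j\ge T_N$ one has $j\,a_N^{\alpha/(\alpha+2)}\ge(\log\tfrac1{a_N})^{1/2}\to\infty$ as required.
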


\begin{proof}

Recall that ${\mathfrak N}=\sum_{j=1}^N j X_j$ and $\Ns=\sum_{j=1}^{T_N} j X_j$ , where $X_1,\dots,X_N$ are independent Poisson random variables with parameters $\frac 1j t_{j,a }$, $j\in[N]$, where we recall the definition of $t_{j,a}$ from \eqref{tjadef}. In both proofs, we are going to use the exponential Chebychev inequality.
We are going to explicitly handle only the upwards deviations (i.e., for ${\mathfrak N}-\Ett[{\mathfrak N}]$ instead of $|{\mathfrak N}-\Ett[{\mathfrak N}]|$), since the case of the downwards deviations is similar. The  first term on the right stems from the application of Markov's inequality, and the second term from estimating the exponential expectation as follows.

(1)  For any $s\in(0,\infty)$,
    \begin{equation}
        \Ett\ek{\ex^{s \rk{{\mathfrak N}-\Ett\ek{{\mathfrak N}}}}}=\exp\rk{\sum_{j=1}^{N}\frac{1}{j}\rk{\ex^{s j}-1-sj}t_{j,a}}\, .
    \end{equation}
    We now pick $s=\kappa a_N^{\alpha/(\alpha+2)}$ and estimate the right-hand side. For the sum on $j\geq T_N$, we have $j a_N^{\alpha/(\alpha+2)}\to\infty$ and therefore get from Lemma~\ref{lem-tasy}(3), with some $C\in(0,\infty)$ that does not depend on $N$,
    \begin{equation}\label{eq:2.31}
    \begin{aligned}
        \sum_{T_N \leq j\leq N}\frac{1}{j}\rk{\ex^{s j}-1-sj}t_{j,a}
        &\le C\sum_{j\ge T_N}\frac{1}{j}\ex^{s j}\ex^{-\beta j a_N\lambda_1(w/a_N)}
        \leq \frac{C}{T_N}\sum_{j\geq T_N}\ex^{-a_N^{\alpha/(\alpha+2)} j[\beta\lambda_1(W)(1+o(1))-\kappa]}\\
        &\le \frac{C a_N^{\alpha/(\alpha+2)}}{\sqrt{\log\frac 1{a_N}}}\frac{\ex^{-cT_Na_N^{\alpha/(\alpha+2)}}}{ca_N^{\alpha/(\alpha+2)}}\leq C\ex^{-c\log(1/a_N)^{1/2}},
        \end{aligned}
    \end{equation} 
    since $\beta \lambda_1(W)>\kappa$. (If $\lim_{N\to\infty}a_N=0$ then it vanishes as $N\to\infty$.) 
    The sum on small $j$ is bounded  as follows. We use Lemma~\ref{lem-tasy}(1) and that $\ex^{x}-1-x\leq x^2\ex^{x}$ for any $x\in(0,\infty)$. Then we see that
    \begin{equation}\label{upperboundconcentration}
    \begin{aligned}
        \sum_{j\le T_N}\frac{1}{j}\rk{\ex^{s j}-1-sj}t_{j,a}
        &\le 
        C a_N^{-d/2}\sum_{j\leq T_N} j^{-1-d/2}s^2 j^2\ex^{sj}
        \leq C a_N^{2\alpha/(\alpha+2)-d/2}\ex^{\kappa T_N a_N^{\alpha/(\alpha+2)}}\sum_{j\le T_N}j^{1-d/2-d/\alpha}\\
&\leq   a_N^{2\alpha/(\alpha+2)-d/2-o(1)}\times\rk{1+T_N^{2-d/2-d/\alpha}+\log(T_N)},
        \end{aligned}
    \end{equation}
    where $a_N^{o(1)}$ is an estimate for $\ex^{\kappa T_N a_N^{\alpha/(\alpha+2)}}$, and the bracket is a generous upper bound for the $j$-sum in the three cases that $1-d/2-d/\alpha$ is $<-1$, or $=-1$ or $>-1$. Now use that $T_N=a_N^{-\alpha/(\alpha+2)+o(1)}$ to see that the right-hand side of \eqref{upperboundconcentration} is equal to $a_N^{o(1)}$ if $\frac d 2<\frac{ 2\alpha}{\alpha+2}$ and is equal to $a_N^{2\alpha/(\alpha+2)-d/2+o(1)}$ otherwise.

(2) The conclusion follows directly from the estimate of \eqref{upperboundconcentration} in (1). Since we no longer need \eqref{eq:2.31}, there is no restriction on $\kappa$.


    
\end{proof}

\subsection{Particles in long loops}\label{sec-longloops}

Recall that $\Nl=\sum_{T_N<j\leq N} j X_j$ is the number of particles in long loops in the PPP (recall \eqref{TNdef}). As in \cite{KVZ23}, we use now intricate combinatorial asymptotics to find sharp asymptotics for the asymptotic distribution of $\Nl$. Write $q\colon [0,\infty)\to [0,\infty)$ for the density of the random variable with Laplace transform
\begin{eqnarray}
    s\mapsto \exp\rk{\int_0^1 \rk{\frac{\ex^{-sx}}{x}-1}\d x}\, .
\end{eqnarray}
Note that $q(x)=\ex^{-\gamma}$ for $x\in[0,1]$,  where $\gamma\approx 0.5772$ is the Euler--Mascheroni constant. See \cite{arratia2003logarithmic} and Section~\ref{sec-PDproof} for more properties of $p$, in particular in connection with the Poisson--Dirichlet distribution.

\begin{lemma}\label{lem:2.7} Suppose that $w$ satisfies Assumption (W). For all sequences $(s_N)_{N\in\N}, (k_N)_{N\in\N}$ in $\N$ such that $T_N\ll s_N\leq k_N\le N$ for any $N$, and that $\lim_N \frac{k_N}{s_N}$ exists,
    \begin{equation}
        {\tt P}_{\beta a_N, w/a_N}^{\ssup N}\rk{\sum_{j=1+T_N}^{s_N}j X_j=k_N}\sim \frac{q(k_N/s_N) }{T_N}\ex^{-\beta a_N \l_1(w/a_N) k_N},\qquad N\to\infty.
    \end{equation}
    In particular,
    \begin{equation}
        {\tt P}_{\beta a_N, w/a_N}^{\ssup N}\rk{\Nl=k_N}\sim \frac{\ex^{-\gamma} }{T_N}\ex^{-\beta a_N\lambda_1(w/a_N) k_N},\qquad N\to\infty.
    \end{equation}
   
\end{lemma}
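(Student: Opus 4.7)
\medskip
\noindent\textbf{Proof plan.}
The strategy is to reduce to a classical local limit theorem for a sum of independent Poissons with parameters $1/j$ (the uniform random permutation / Ewens setting), via a Cramér-type tilting argument based on the sharp asymptotic supplied by Lemma~\ref{lem-tasy}(3). Write $\mu_N = \beta a_N\lambda_1(w/a_N)$ and note that by \eqref{TNdef} combined with Lemma~\ref{lem_spectralscaling}, $\mu_N T_N \sim \beta\lambda_1(W)(\log 1/a_N)^{1/2}\to\infty$ (and analogously when $a_N$ stays bounded away from $0$). Consequently Lemma~\ref{lem-tasy}(3) gives $t_{j,a_N} = e^{-\mu_N j}(1+\eps_j)$ for $j>T_N$ with $\sup_{j>T_N}|\eps_j|$ super-polynomially small in $1/a_N$.

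First I would set up the tilting against auxiliary independent $Y_j\sim\mathrm{Poi}(1/j)$. For any nonnegative integer sequence $(x_j)_{T_N<j\leq s_N}$ with $\sum_j j\,x_j = k_N$, a direct computation gives
\begin{equation*}
\prod_{j=T_N+1}^{s_N}\frac{(t_{j,a_N}/j)^{x_j}\,e^{-t_{j,a_N}/j}}{x_j!}
= e^{-\mu_N k_N}\Big(\prod_j(1+\eps_j)^{x_j}\Big)
\exp\!\Big(\!\!\sum_{T_N<j\leq s_N}\!\!\big(\tfrac{1}{j}-\tfrac{t_{j,a_N}}{j}\big)\Big)
\prod_{j=T_N+1}^{s_N}\frac{(1/j)^{x_j}\,e^{-1/j}}{x_j!}.
\end{equation*}
Since $\sum_j x_j \leq k_N/T_N$ grows at most polynomially in $1/a_N$ while $\max|\eps_j|$ is super-polynomially small, $\prod(1+\eps_j)^{x_j} = 1+o(1)$ uniformly over admissible $(x_j)$. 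Summing over such $(x_j)$, using $\sum_{T_N<j\leq s_N}1/j = \log(s_N/T_N)+o(1)$ and $\sum_{T_N<j\leq s_N}t_{j,a_N}/j = O(e^{-\mu_N T_N}/T_N) = o(1)$, the prefactor reduces to $(s_N/T_N)(1+o(1))$ and one obtains
\begin{equation*}
{\tt P}_{\beta a_N,w/a_N}^{\ssup N}\Big(\!\sum_{T_N<j\leq s_N}\!\!j X_j = k_N\Big)
\sim e^{-\mu_N k_N}\,\frac{s_N}{T_N}\,
{\tt P}\Big(\!\sum_{T_N<j\leq s_N}\!\!j Y_j = k_N\Big).
\end{equation*}

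The heart of the argument is then a local limit theorem for the classical Dickman-type sum $M_N = \sum_{T_N<j\leq s_N}j Y_j$, namely ${\tt P}(M_N = k_N)\sim q(k_N/s_N)/s_N$. The elementary computation
\begin{equation*}
\log\E[e^{itM_N/s_N}] = \sum_{T_N<j\leq s_N}\frac{e^{itj/s_N}-1}{j} \longrightarrow \int_0^1\frac{e^{ity}-1}{y}\,\d y,
\end{equation*}
valid since $T_N/s_N\to 0$, identifies $M_N/s_N$ as converging in distribution to a random variable with density $q$. To upgrade this to a local limit theorem for the integer-valued $M_N$, I would apply Fourier inversion ${\tt P}(M_N = k_N) = \frac{1}{2\pi}\int_{-\pi}^\pi e^{-itk_N}\E[e^{itM_N}]\,\d t$, combined with Gaussian-type tail bounds on $|\E[e^{itM_N}]|$ away from $t=0$. \emph{This tail control is the main technical obstacle}, because the terms $jY_j$ are highly non-uniform and the jumps of $M_N$ can be as large as $s_N$; the existence of $\lim k_N/s_N$ is used here to identify the leading constant $q(k_N/s_N)$. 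The combinatorial / Fourier-analytic toolkit developed in \cite{KVZ23} should supply exactly the required bound after suitable adaptation.

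Combining the three ingredients yields the announced $q(k_N/s_N)\,e^{-\mu_N k_N}/T_N$. For the second statement about $\Nl$, I would decompose according to the largest loop length: on $\{\Nl = k_N\}$ no loop of length $> k_N$ can be present, so by independence of $(X_j)_j$
\begin{equation*}
{\tt P}_{\beta a_N, w/a_N}^{\ssup N}(\Nl = k_N)
= {\tt P}_{\beta a_N, w/a_N}^{\ssup N}\Big(\!\sum_{T_N<j\leq k_N}\!\!j X_j = k_N\Big)\cdot\exp\!\Big(\!-\!\!\sum_{j=k_N+1}^N\!\tfrac{t_{j,a_N}}{j}\Big).
\end{equation*}
Applying the first formula with $s_N := k_N$ (so $k_N/s_N = 1$ and $q(1) = e^{-\gamma}$), and noting $\sum_{j>k_N}t_{j,a_N}/j = O(e^{-\mu_N k_N}/(\mu_N k_N)) = o(1)$, yields $e^{-\gamma}e^{-\mu_N k_N}/T_N$, as desired.
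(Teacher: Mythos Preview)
Your approach is the paper's: tilt the Poisson parameters $t_{j,a_N}/j$ to $1/j$ via Lemma~\ref{lem-tasy}(3), pull out $e^{-\mu_N k_N}$ and the harmonic prefactor $s_N/T_N$, then invoke a local limit theorem for $\sum_{T_N<j\le s_N}jY_j$ with $Y_j\sim\mathrm{Poi}(1/j)$. The only structural difference is that the paper does not attempt the Fourier inversion you outline but cites \cite[Theorem~4.13]{arratia2003logarithmic} directly for ${\tt P}\big(\sum_{T_N<j\le s_N}jY_j=k_N\big)\sim q(k_N/s_N)/s_N$. Your derivation of the second display via $s_N=k_N$ and the independent factor $\prod_{j>k_N}{\tt P}(X_j=0)$ is clean; the paper leaves this implicit.

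There is, however, a genuine gap in your error control. You assert that $\sup_{j>T_N}|\eps_j|$ is super-polynomially small in $1/a_N$, so that $(k_N/T_N)\cdot\max|\eps_j|\to 0$. This is false: from Lemma~\ref{lem-tasy}(3), $|\eps_j|=\Ocal(e^{-cj\,a_N^{\alpha/(\alpha+2)}})$, and at $j=T_N+1$ this is only $e^{-c(\log 1/a_N)^{1/2}}=a_N^{o(1)}$, which decays \emph{slower} than every positive power of $a_N$. Meanwhile $k_N/T_N$ can be a genuine power of $1/a_N$ (e.g.\ $a_N^{-d/2+\alpha/(\alpha+2)+o(1)}$ when $\chi<\infty$), so your product $(k_N/T_N)\max_j|\eps_j|$ diverges and the conclusion $\prod_j(1+\eps_j)^{x_j}=1+o(1)$ does not follow from the stated argument. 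The paper does not spell out this step either, deferring to \cite[Proposition~2.7]{KVZ23}; the remedy is to exploit that under the reference measure the number of parts $\sum_{T_N<j\le s_N}Y_j$ has mean $\sim\log(s_N/T_N)=O(\log N)$ rather than $k_N/T_N$, and to treat partitions with atypically many parts separately.
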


\begin{proof}
The proof follows the same argument as \cite[Proposition 2.7]{KVZ23}. 
Let us first consider the case $a_N\rightarrow 0$.
We write $\Pfrak_k^{\ssup {N}}$ for the set of sequences $m=(m_r)_{T_N<r\le s_N}$ of positive integers such that $\sum_{T_N<r\le s_N} r m_r=k_N$. 
Then 
    \begin{equation}\label{longloops1}
        {\tt P}_{\beta a_N, w/a_N}^{\ssup N}\rk{\sum_{j=T_N+1}^{s_N}j X_j=k_N}=\ex^{-\sum_{j=T_N+1}^{s_N}\frac{t_{j,a_N}}{j}}\sum_{m\in \Pfrak_k^{\ssup {N}}}\prod_{T_N<r\le s_N}\frac{t_{r,a_N}^{m_r}}{r^{m_r}m_r!}\, .
    \end{equation}
We claim that 
$
\ex^{-\sum_{j=T_N+1}^{s_N}\frac{t_{j,a_N}}{j}}\rightarrow 1
$ as $N\to\infty$. 
Indeed, since $a_N\to0$, we have $ja_N^{\alpha/(\alpha+2)}>T_Na_N^{\alpha/(\alpha+2)}\sim\sqrt{\log\frac 1{a_N}}\to\infty$. Therefore by Lemma~\ref{lem-tasy}(3) and Lemma~\ref{lem_spectralscaling},
\begin{equation}
\sum_{j> T_N}\frac{t_{j,a_N}}{j}\sim\sum_{j> T_N}\frac 1j\ex^{-\beta  \lambda_1(w/a_N) a_N j}\leq \frac 1{T_N}\frac 1{1-\ex^{-\beta\lambda_1(W) a_N^{\alpha/(\alpha+2)}}(1+o(1))}=O(\smfrac 1{a_N^{\alpha/(\alpha+2)} T_N})\to 0.
\end{equation}

For the remaining factor in \eqref{longloops1}, by Lemma~\ref{lem-tasy}(3), $t_{r,a_N}= \ex^{-\beta \lambda_1(w/a_N) ra_N}(1+\Ocal(\ex^{-\beta c r a_N^{\alpha/(2+\alpha)}}))$ for $r>T_N\rightarrow\infty$, hence, as in the proof of \cite[Proposition 2.7]{KVZ23}, we obtain
\begin{align}\label{eq:t_complex_lower}
\sum_{m\in \Pfrak_k^{\ssup{N}}}\prod_{T_N<r\le s_N}\frac{t_{r,a_N}^{m_r}}{r^{m_r}m_r!}
&\sim  
\ex^{-\beta \lambda_1(w/a_N) a_N k }\sum_{m\in \Pfrak_k^{\ssup{N}}}\prod_{T_N<r\le s_N}\frac{1}{r^{m_r}m_r!}\\
&=\ex^{-\beta \lambda_1(w/a_N) a_N k+\sum_{T_N<r\le s_N}\frac 1 r}\mathbb P\rk{\sum_{T_N<r\le s_N}rY_r=k_N},
\end{align}
where the $Y_r$'s are independent Poisson random variables with parameter $\frac 1 r$.
By \cite[Theorem 4.13]{arratia2003logarithmic} (take $\theta=1,b=T_N\ll n=s_N\le m=k_N,y=1$),
\[
\mathbb P\rk{\sum_{T_N<r\le s_N}rY_r=k_N}\sim\frac {q(k_N/s_N)} {s_N}.
\]
Therefore, we may conclude that
\begin{align}
{\tt P}_{\beta a_N, w/a_N}^{\ssup N}\rk{\Nl=k_N}\sim\sum_{m\in \Pfrak_k^{\ssup{N}}}\prod_{T_N<r\le s_N}\frac{t_{r,a_N}^{m_r}}{r^{m_r}m_r!}
\sim  
\frac{q(k_N/s_N)}{T_N}\ex^{-\beta \lambda_1(w/a_N) a_N k}.
\end{align}

When $(a_N)_{N\in\N}$ is bounded, we are interested in 
\begin{equation}
{\tt P}_{\beta a_N, w/a_N}^{\ssup N}\rk{\sum_{\sqrt{\log N}<j\le s_N}jX_j=k_N}
=\ex^{-\sum_{\sqrt{\log N}<j\le s_N}\frac{t_{j,a_N}}{j}}\sum_{m\in \Pfrak_k^{\ssup{\sqrt{\log N}}}}\prod_{\sqrt{\log N}<r\le s_N}\frac{t_{r,a_N}^{m_r}}{r^{m_r}m_r!},
\end{equation}
where we still have
\[
\sum_{j>\sqrt{\log N}}\frac{t_{j,a}}{j}
\sim\sum_{j>\sqrt{\log N}}\frac 1 j \ex^{-\beta \lambda_1(w/a_N) a_N j}\rightarrow 0,
\]
and
\begin{align*}
\sum_{m\in \Pfrak_k^{\ssup{\sqrt{\log N}}}}\prod_{\sqrt{\log N}<r\le s_N}\frac{t_{r,a_N}^{m_r}}{r^{m_r}m_r!}
\sim\ex^{-\beta \lambda_1 a_N k +\sum_{\sqrt{\log N}<r\le k}\frac 1 r}\frac{q(k_N/s_N)}{k}\sim \frac{q(k_N/s_N)}{\sqrt{\log N}}\ex^{\beta\lambda_1(w/a_N) a_Nk_N}.
\end{align*}
\end{proof}

\subsection{Lower bound for the denominator}\label{sec-lowbound}

We suppose that Assumption (W) holds. On base of Lemma~\ref{lem:2.7}, we give now a sharp lower bound for the denominator in \eqref{PPPreprfree}.

\begin{lemma}\label{lem-lowbounddenom} Assume that $\liminf_{N\to\infty}Na_N^{d/2}>\rho_w$, then there is a sequence $(\delta_N)_N$ that vanishes as $N\to\infty$ such that, for all large $N$, 
\begin{equation}\label{Eq:lowerBounded}
{\tt P}_{\beta a_N,w/a_N}^{\ssup N}(\Nrd=N)\geq\ex^{-\beta a_N  \lambda_1(w/a_N) N\rk{1-\rho_w/(N a_N^{d/2})+\delta_N}}(1+o(1)).
\end{equation}
\end{lemma}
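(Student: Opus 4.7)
The plan is to exploit the independence of short- and long-loop populations combined with concentration of $\Ns$. Since short and long loops are counts in disjoint measurable subsets of loop space, $\Ns$ and $\Nl$ are independent under ${\tt P}^{\ssup N}_{\beta a_N, w/a_N}$, so
\[
{\tt P}^{\ssup N}_{\beta a_N, w/a_N}(\Nrd = N) = \sum_k {\tt P}^{\ssup N}_{\beta a_N, w/a_N}(\Ns = N-k)\,{\tt P}^{\ssup N}_{\beta a_N, w/a_N}(\Nl = k).
\]
First I would lower bound this sum by restricting to a window $I = [k_\star - r_N, k_\star + r_N]$ around the typical value $k_\star := N - \Ett^{\ssup N}_{\beta a_N, w/a_N}[\Ns]$, with $r_N$ chosen slightly above the fluctuation scale of $\Ns$.

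Next I would plug in the two ingredients. By Lemma~\ref{lem:2.7}, applied at the worst point $k=k_\star+r_N$ of the window, the long-loop density satisfies uniformly for $k \in I$ (note $q(k/N)=\ex^{-\gamma}$ on $[0,1]$)
\[
{\tt P}^{\ssup N}_{\beta a_N, w/a_N}(\Nl = k) \geq (1 - o(1))\,\frac{\ex^{-\gamma}}{T_N}\,\ex^{-\beta a_N \lambda_1(w/a_N)(k_\star + r_N)},
\]
while Proposition~\ref{prop:2.6}(2) ensures ${\tt P}(|\Ns - \Ett[\Ns]| \leq r_N) \to 1$, provided $r_N$ is chosen above the threshold $a_N^{-[\frac d2 - \frac{2\alpha}{\alpha+2}]_+ - \frac\alpha{\alpha+2}+o(1)}$ given there. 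Combining these yields
\[
{\tt P}^{\ssup N}_{\beta a_N, w/a_N}(\Nrd = N) \geq (1 + o(1))\, T_N^{-1}\, \ex^{-\beta a_N \lambda_1(w/a_N)(k_\star + r_N)}.
\]

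Finally I would absorb both the prefactor $T_N^{-1}$ and the slack $r_N$ into an error term inside the exponent. By Corollary~\ref{cor:N_short}, $\Ett[\Ns] = \rho_w a_N^{-d/2}(1 + o(1))$, so $k_\star = N - \rho_w a_N^{-d/2} + o(a_N^{-d/2})$, and the preceding inequality rearranges into the target bound with
\[
N \delta_N \;:=\; r_N \;+\; \frac{\log T_N}{\beta a_N \lambda_1(w/a_N)} \;+\; o(a_N^{-d/2}).
\]

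The main obstacle is the tight tuning of $r_N$: the fluctuation scale of $\Ns$ demanded by Proposition~\ref{prop:2.6}(2) and the inverse loop-weight $(\beta a_N \lambda_1(w/a_N))^{-1} \asymp a_N^{-\alpha/(\alpha+2)}$ (from Lemma~\ref{lem_spectralscaling}) are comparable, so one must balance the constraint $r_N \to\infty$ fast enough for concentration against the requirement $\delta_N \to 0$, which demands both $r_N/N \to 0$ and $(\beta a_N \lambda_1(w/a_N))^{-1}\log T_N / N \to 0$. Both conditions are simultaneously satisfiable precisely because $d \geq 3$ forces $d/2 > \alpha/(\alpha+2)$ for every $\alpha\in(0,\infty]$, and the hypothesis $\liminf N a_N^{d/2} > \rho_w$ gives $N \geq c\,a_N^{-d/2}$; verifying this carefully in each regime ($a_N \to 0$ versus $a_N$ bounded below) is the technical heart of the argument.
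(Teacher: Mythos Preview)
Your proof is correct and follows essentially the same architecture as the paper's: both split $\Nrd=\Ns+\Nl$, exploit independence, use Proposition~\ref{prop:2.6}(2) together with Corollary~\ref{cor:N_short} to concentrate $\Ns$ near $\rho_w a_N^{-d/2}$, and then lower-bound the long-loop probability before absorbing polynomial prefactors into $\delta_N$.

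The one substantive difference is how you bound $\Ptt(\Nl=m)$. You invoke Lemma~\ref{lem:2.7}, i.e.\ the full combinatorial asymptotics from \cite{arratia2003logarithmic}, to get $\Ptt(\Nl=m)\sim \ex^{-\gamma}T_N^{-1}\ex^{-\beta a_N\lambda_1(w/a_N)m}$. The paper instead takes the more elementary route of bounding below by the single-loop event $\{X_m=1,\ X_j=0\text{ for all other }j>T_N\}$, which via Lemma~\ref{lem-tasy}(3) contributes essentially $\ex^{-\beta a_N\lambda_1(w/a_N)m}$ up to a $1/m$ factor. Both prefactors (your $T_N^{-1}$, the paper's $1/m$) are polynomially small and are harmlessly absorbed into $\delta_N$ by the same computation you sketch. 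Your route is cleaner once Lemma~\ref{lem:2.7} is in hand; the paper's route avoids relying on that heavier lemma for what is, after all, only a lower bound. The ``tight tuning of $r_N$'' that you flag as the technical heart is dispatched by the paper in one line (pick $\delta_N$ so that the first term on the right of \eqref{shortloopsconc} dominates); your dimension-counting argument is the same observation spelled out.
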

\begin{proof}
     Abbreviate ${\tt P}_{\beta a_N,w/a_N}^{\ssup N}$ by $\tt P$, analogously for the expectations, and $a_N$ by $a$. Recall that $\mathfrak N=\sum_{j=1}^N j X_j$  and that $\Ns=\sum_{j\leq T_N} j X_j$, where the $X_j$ are independent Poisson  random variables under $ \P$ with  parameters $\frac 1j t_{j,a}$.
We lower bound against the event that there is one large loop and otherwise only small ones with about $\rho_w a^{-d/2}$ particles:
    \begin{equation}
    \begin{aligned}
       {\tt P}\rk{\Nrd=N}&\ge \sum_{k\in\N\colon |k- \rho_wa^{-d/2}|\leq \delta_N N}{\tt P}\rk{X_{N-k}=1}{\tt P}\rk{X_j=0\text{ for all }j\in \gk{T_N,\ldots, N-1}\setminus\{N-k\}}\\
        &\quad\times{\tt P}\rk{\Ns=k}\,,
        \end{aligned}
    \end{equation} 
    where $\delta_N\in(0,1)$ with $1\gg \delta_N\gg 1/N$ is suitable (see below). 
    For all $k$ in that sum, we have
    \begin{equation}
        {\tt P}\big(X_j=0\text{ for all }j\in \gk{T_N,\ldots, N-1}\setminus\{N-k\}\big)\ge \exp\rk{-\sum_{j\ge T_N}\frac{t_{j,a}}{j}}=1+o(1)\, 
    \end{equation}
    as we saw in the proof of Lemma~\ref{lem:2.7}. Furthermore, by Lemma~\ref{lem-tasy}(3),
    \begin{equation}
        {\tt P}\rk{X_{N-k}=1}=t_{N-k,a}\ex^{-t_{N-k,a}}\sim\ex^{-\beta a\l_1(w/a) (N-k) }\geq \ex^{-\beta a\l_1(w/a)N(1-\rho_w /(Na^{d/2})+\delta_N) }\, .
    \end{equation}
Recall from Corollary~\ref{cor:N_short} that ${\tt E}\ek{\Ns}\sim \rho_wa^{-2/d}$. Using Proposition \ref{prop:2.6} for $k=\delta_N N$ with $\delta_N$ picked such that the first term on the right-hand side of \eqref{shortloopsconc} is the leading term, we get that 
$$
\sum_{k\in\N\colon |k- \rho_wa^{-d/2}|\leq \delta_N N}{\tt P}\rk{\Ns=k}\geq{\tt P}\big( |\Ns-{\tt E}[\Ns]|\leq \smfrac 12 \delta_N N\big)\to 1,\qquad  \mbox{as }N\to\infty.
$$
This implies \eqref{Eq:lowerBounded}.
\end{proof}

\section{Proof of Theorem~\ref{thm-ODLROlongloops}(1): super-critical regime}\label{sec-ProofODLRO}

\noindent This section is under the assumption that $\chi=\liminf_{N\to\infty}N a_N^{d/2}>\rho_w$ and contains the proof of Theorem~\ref{thm-ODLROlongloops}(1), i.e.,  for the asymptotics \eqref{gammaasy} of the reduced one-particle density matrix in Section~\ref{sec-ODLROproof},  for the limiting distribution of the macroscopic loop lengths in terms of the Poisson--Dirichlet distribution in Section~\ref{sec-PDproof} and for the convergence of the normalized PPP (i.e., the microscopic loop lengths) in Section~\ref{sec-microconvProof}. (The proof of Theorem~\ref{thm-ODLROlongloops}(1)(d) is deferred to Section~\ref{sec-freeenergy}.) As always, we are under Assumption (W) for the trap potential $w$. Recall that $\rho_w=\sum_{k\in\N}(4\pi\beta k)^{-d/2}\int\ex^{-\beta k w}\d x$.

\subsection{Proof of \texorpdfstring{\eqref{gammaasy}}{} in
Theorem~\ref{thm-ODLROlongloops}(1)}\label{sec-ODLROproof}

This proof is analogous to the proof of \cite[Proposition 2.1]{KVZ23}. We abbreviate $a_N$ by $a$ and  ${\tt P}_{\beta a,w/a}^{\ssup N}$ by ${\tt P}$, analogously for the expected value. Our starting point is the representation of $\gamma_N^{\ssup a}$ from Lemma~\ref{Cor-PPPreprfree}, that is, 
\begin{equation}\label{startingpoint}
    \gamma^{\ssup a}_N(x,y)=\sum_{r=1}^N \BB_{x,y}^{\ssup{\beta a r,w/a}}(\Ccal_{\beta a r})
    \frac{{\tt P}(\Nrd=N-r)}
    { {\tt P}(\Nrd=N)}.
    \end{equation}

We carry out the proof only for the case $a_N\to0$ as $N\to$ and leave the second case to the reader. Fix some small $\eps>0$. It is not hard to show that in \eqref{startingpoint}, the two partial sums on $r\leq T_N$ and on $r>N(1-\frac {\rho_w}\chi-\eps)$ are negligible by using the estimate $\BB_{x,y}^{\ssup{\beta,w}}(\Ccal_{\beta})\le\frac{1}{(4\pi \beta)^{d/2}}\ex^{-\abs{x-y}^2/(4\beta)}$ and the lower bound for ${\tt P}(\mathfrak N=N)$ from Lemma~\ref{lem-lowbounddenom}.

For the remaining, we decompose the number $\mathfrak N$ of all particles into $\mathfrak N=\Ns+\Nl$, which denote the number of particles in loops of lengths $\leq T_N=\lfloor a_N^{-\alpha/(\alpha+2)}\log(\frac 1{a_N})^{1/2}\rfloor$ respectively of lengths $>T_N$; see \eqref{TNdef}. Then 
\begin{equation}\label{eq:supercritical_N}
{\tt P}(\Nrd=N-r)=\sum_k{\tt P}(\Nrd^{\ssup{\rm short}}=k){\tt P}(\Nrd^{\ssup{\rm long}}=N-r-k).
\end{equation}
We observe from Corollary~\ref{cor:N_short} that 
$$\limsup_{N\to\infty}\frac {\Ett\ek{\Ns}}N =\rho_w \limsup_{N\to\infty}\frac1{N a^{d/2}}=\frac{\rho_w} \chi<1,
$$ in the case of Theorem~\ref{thm-ODLROlongloops}(1). 
According to Proposition~\ref{prop:2.6}, the sum on $k$ strongly concentrates around the expectation 
$$
{\tt E}(\Nrd^{\ssup{\rm short}})\sim \rho_w a^{-d/2},
$$
more precisely, to estimate \eqref{eq:supercritical_N}, we can focus on $k\in[\rho_w a^{-d/2}-\eps N,\rho_wa^{-d/2}+\eps N]\cap\N$ for all sufficiently large $N$. 

Furthermore, according to Lemma~\ref{lem:2.7},
\begin{equation}\label{convol}
{\tt P}(\Nrd^{\ssup{\rm long}}=N-r-k)\sim \frac{\ex^{-\gamma}}{T_N}\ex^{-\beta a \lambda_1(w/a)(N-r-k)},
\end{equation}
as long as $r\ll N-k\leq  N-\rho_w a^{-d/2}-\eps N\sim N (1-\rho_w/\chi-\eps)$. 
Using \eqref{convol} once more for $N-k$ instead of $N-r-k$, we see that
\begin{equation}\label{Heur1}
{\tt P}(\Nrd^{\ssup{\rm long}}=N-r-k)\sim {\tt P}(\Nrd^{\ssup{\rm long}}=N-k)\ex^{\beta a \lambda_1(w/a) r}.
\end{equation}

Finally, from Lemma~\ref{lem-tasy}(3) we deduce that
\begin{equation}\label{eq:34}\BB_{x,y}^{\ssup{\beta ar,w/a}}(\Ccal_{\beta ar})\sim \ex^{- \beta ar\lambda_1(w/a)}\phi_1^{\ssup {w/a} }(x)\phi_1^{\ssup {w/a}}(y)\qquad \text{ if }ra^{\alpha/(\alpha+2)}\to\infty. 
\end{equation}

Putting \eqref{eq:supercritical_N}, \eqref{Heur1} and \eqref{eq:34} into \eqref{startingpoint}, we have
$$
\begin{aligned}
\gamma^{\ssup a}_N(x,y)&\sim 
\sum_{r=T_N}^{N(1-\rho_w/\chi-\eps)} \ex^{-\beta a r\lambda_1(w/a)}\phi_1^{\ssup a}(x)\phi_1^{\ssup a}(y)\ex^{\beta a \lambda_1(w/a) r}\\
&\qquad \times \frac{\sum_{k\colon |k-\rho_w a^{-d/2}|\leq \eps N}{\tt P}(\Nrd^{\ssup{\rm short}}=k){\tt P}(\Nrd^{\ssup{\rm long}}=N-k)}{ {\tt P}(\Nrd=N)}\\
&\sim\sum_{r=T_N}^{N(1-\rho_w/\chi-\eps )} \phi_1^{\ssup a}(x)\phi_1^{\ssup a}(y)\\
&=N\Big(1-\frac{\rho_w}\chi-\eps-T_N\Big) \phi_1^{\ssup a}(x)\phi_1^{\ssup a}(y)(1+o(1)).
\end {aligned}
$$
Now the conclusion follows by noticing $T_N=o(N)$ and taking $\eps\downarrow0$.

\subsection{Convergence to the Poisson--Dirichlet distribution}\label{sec-PDproof}

In this section, we prove Theorem \ref{thm-ODLROlongloops}(1)(b). Recall that $L_1\geq L_2\geq L_3\geq ...$ are the lengths appearing in the loop soup. Recall the density $q$ introduced before Lemma~\ref{lem:2.7}.
Our main goal is then reduced to the following:
\begin{proposition}\label{PropPoissonDirConv}
    Suppose that $\chi\in(\rho_w,\infty]$. Then, for any $m\in\N$ and $t_1>\ldots> t_m>0$ with $\sum_{i=1}^m t_i<1$,
    \begin{multline}\label{PDweakconv}
        \Ptt_{\beta a_N,w/a_N}^{\ssup N}\rk{\smfrac 1{N(1-\rho_w/\chi)} \big(L_1,\dots,L_m\big)\in \d (t_1,\ldots,t_m)\,\Big|\,\Nrd=N}\\
        \Longrightarrow \frac{\ex^{\gamma}}{t_1\cdots t_m}q\rk{\frac{1-(t_1+\ldots+t_m)}{t_m}}\,\d(t_1,\ldots,t_m) .
    \end{multline}
\end{proposition}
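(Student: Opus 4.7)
Set $M_N := N(1-\rho_w/\chi)$ and let $k_1 > \cdots > k_m$ be integers with $k_i = t_i M_N(1+o(1))$. The approach is to exploit the independence of the Poissonian variables $X_j$ under $\Ptt := \Ptt_{\beta a_N,w/a_N}^{\ssup N}$, compute the joint atomic probability $\Ptt(L_1=k_1,\ldots,L_m=k_m,\Nrd=N)$ using Lemmas~\ref{lem-tasy} and~\ref{lem:2.7}, and then match the resulting discrete density to the right-hand side of \eqref{PDweakconv} after dividing by $\Ptt(\Nrd=N)$ and performing a Jacobian change of variables.

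Writing $J = \{k_1, \ldots, k_m\}$, the independence of the $X_j$'s gives the exact factorisation
\begin{equation*}
\Ptt\rk{L_1=k_1,\ldots,L_m=k_m,\Nrd=N}=\prod_{i=1}^m\Ptt(X_{k_i}=1)\cdot\prod_{\substack{j>k_m\\j\notin J}}\Ptt(X_j=0)\cdot\Ptt\rk{\sum_{j<k_m}jX_j=N-\textstyle\sum_i k_i}.
\end{equation*}
The middle factor is $1+o(1)$ by Lemma~\ref{lem-tasy}(3), and the same lemma gives $\Ptt(X_{k_i}=1)\sim k_i^{-1}\ex^{-\beta a_N\lambda_1(w/a_N)k_i}$. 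For the third factor I would split the remaining loops at $T_N$, condition on $\Ns=\ell$ and restrict $\ell$ to the concentration window $\rho_w a_N^{-d/2}(1+o(1))$ via Proposition~\ref{prop:2.6}, then apply Lemma~\ref{lem:2.7} with $s_N = k_m$ to obtain
\begin{equation*}
\Ptt\rk{\sum_{T_N<j<k_m}jX_j=N-\textstyle\sum_i k_i-\ell}\sim T_N^{-1}\,q\rk{\frac{1-\sum_i t_i}{t_m}}\ex^{-\beta a_N\lambda_1(w/a_N)(N-\sum_i k_i-\ell)}.
\end{equation*}
Multiplying everything together, the exponentials telescope to $\ex^{-\beta a_N\lambda_1(w/a_N)(N-\rho_w a_N^{-d/2})}$ and the $\ell$-sum contributes $1+o(1)$, giving
\begin{equation*}
\Ptt(L_1=k_1,\ldots,L_m=k_m,\Nrd=N)\sim\frac{q\rk{(1-\sum_i t_i)/t_m}}{T_N\,k_1\cdots k_m}\,\ex^{-\beta a_N\lambda_1(w/a_N)(N-\rho_w a_N^{-d/2})}.
\end{equation*}

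To conclude I need the sharp asymptotics $\Ptt(\Nrd=N)\sim \ex^{-\gamma} T_N^{-1}\ex^{-\beta a_N\lambda_1(w/a_N)(N-\rho_w a_N^{-d/2})}$. Lemma~\ref{lem-lowbounddenom} already captures the exponential rate; the prefactor and the constant $\ex^{-\gamma}$ arise from the $m=1$ version of the above display after integrating $t_1\in(0,1)$ against the identity $\int_0^1 t^{-1}q((1-t)/t)\,\d t=\ex^{-\gamma}$, a standard property of the Dickman-type density $q$ (see \cite{arratia2003logarithmic}). Taking the ratio, the Jacobian $M_N^m$ from replacing the sum over $(k_1,\ldots,k_m)$ by an integral over $(t_1,\ldots,t_m)$ cancels the factors $1/k_i\sim 1/(t_iM_N)$, leaving exactly the density $\ex^\gamma\, q((1-\sum_i t_i)/t_m)/(t_1\cdots t_m)$ on the ordered simplex, as claimed.

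The main obstacle will be upgrading this pointwise convergence of discrete densities to the weak convergence stated in \eqref{PDweakconv}. Three technical points must be addressed: (i) a matching upper bound for $\Ptt(\Nrd=N)$ with the correct constant $\ex^{-\gamma}$, which can be extracted from the $m=1$ case of the identity above combined with a dominated-convergence argument or a direct combinatorial upper bound paralleling Lemma~\ref{lem-lowbounddenom}; (ii) uniformity in Lemma~\ref{lem:2.7} over compact ratios $k_N/s_N$ bounded away from $0$ and $1$, needed to integrate against test functions; and (iii) tightness at the boundary of the simplex $\{t_1\geq\cdots\geq t_m>0,\,\sum_i t_i<1\}$, which follows from Poissonian tail bounds on the $X_j$'s and the stretched-exponential concentration in Proposition~\ref{prop:2.6} that prevents macroscopic mass from leaking into the short-loop regime.
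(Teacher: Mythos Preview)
Your overall strategy is close to the paper's, but there is a genuine gap in the middle. The claim that ``the $\ell$-sum contributes $1+o(1)$'' is not justified and is in fact false when $a_N\to 0$. After centring, that sum is the exponential moment $\Ett\big[\ex^{\theta(\Ns-\Ett[\Ns])}\big]$ with $\theta=\beta a_N\lambda_1(w/a_N)\asymp a_N^{\alpha/(\alpha+2)}$; its logarithm is exactly $\sum_{j\le T_N}\frac{1}{j}(\ex^{\theta j}-1-\theta j)\,t_{j,a_N}$ (compare the computation around \eqref{upperboundconcentration}), and already the $j=1$ term is of order $\theta^2\, t_{1,a_N}\asymp a_N^{2\alpha/(\alpha+2)-d/2}$, which diverges whenever $d/2>2\alpha/(\alpha+2)$ (e.g.\ $d\ge 3$ with $\alpha=2$). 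Consequently the separate asymptotics you assert for the numerator $\Ptt(A,\Nrd=N)$ and for the denominator $\Ptt(\Nrd=N)$ are both incorrect, and your proposed route to the sharp constant $\ex^{-\gamma}$ for $\Ptt(\Nrd=N)$ inherits the same error.

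The paper avoids this entirely by conditioning first on $\Nl=k$ rather than on $\Nrd=N$. Since $A=\{L_1=j_1,\dots,L_m=j_m\}$ involves only long loops, one writes
\[
\Ptt(A\mid \Nrd=N)=\sum_{k}\Ptt(A\mid \Nl=k)\,\Ptt(\Nl=k\mid \Nrd=N),
\]
and two applications of Lemma~\ref{lem:2.7} (once to $\Ptt(\sum_{T_N<j<j_m}jX_j=k-J)$ and once to $\Ptt(\Nl=k)$) make the exponential factors $\ex^{-\beta a_N\lambda_1(w/a_N)\,\cdot}$ cancel \emph{exactly} in the ratio $\Ptt(A\mid \Nl=k)$, uniformly over $k$ in the concentration window. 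The $k$-sum then collapses to $\sum_k\Ptt(\Nl=k\mid \Nrd=N)=1+o(1)$, and no sharp asymptotics for $\Ptt(\Nrd=N)$ is ever needed. Your computation can in fact be repaired by observing that the problematic $\ell$-sum is \emph{identical} in your numerator and denominator and therefore cancels in the ratio; this is precisely what the paper's conditioning accomplishes in a single stroke.
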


From this, the weak convergence of $(N(1-\rho_w/\chi))^{-1}(L_i)_{i=1,\dots,m}$ towards the first $m$-dimensional distribution of the Poisson--Dirichlet distribution follows, according to the Portemanteau theorem. From Scheff\'{e}'s theorem, see~\cite[Corollary 5.11]{arratia2003logarithmic}, the convergence of the  entire sequence  follows.

\begin{proof} Abbreviate ${\tt P}=\Ptt_{\beta a_N,w/a_N}^{\ssup N}$ and $a=a_N$.
    Fix $j_1\ge j_2\ge \ldots\ge j_m\in\N$ that such that $j_i\sim t_{i} N(1-\rho_w/\chi)$, for all $1\le i\le m$. Then, for all large $N$, we even have that $j_1>j_2>\ldots>j_m$. Abbreviate $A=\{L_1=j_1,\ldots,L_m=j_m \}$. Recall that ${\rm N}^{\ssup{\rm long}}$ denotes the number of particles in long loops, i.e., in loops of length $>T_N$ defined in \eqref{TNdef}. Using  the concentration result of Proposition~\ref{prop:2.6} and the lower bound in  Lemma~\ref{lem-lowbounddenom}, we can  decompose
$$
\Ptt(A\mid{\mathfrak N}=N)
=\sum_{k \in\N\colon |\frac kN-(1-\rho_w/\chi)|\leq  \delta_N} \Ptt(A\mid{\mathfrak N}^{\ssup{\rm long}}=k)\Ptt({\mathfrak N}^{\ssup{\rm long}}=k\mid {\mathfrak N}=N)+o(N^{-m}),
$$
where $(\delta_N)_N$ is as in Lemma~\ref{lem-lowbounddenom}, i.e., it satisfies $ \delta_N\to0$.
As in the proof of  \cite[Proposition 4.1]{KVZ23}, it suffices to show that, for any $k=k_N$ in the sum above,
\begin{equation}\label{PDproofgoal}
\lim_{N\to\infty}(N(1-{\smfrac{\rho_w}\chi}))^{m}\Ptt(A\mid{\mathfrak N}^{\ssup{\rm long}}=k_N)=\frac{\ex^{\gamma}}{t_1\cdots t_m}q\rk{\frac{1-(t_1+\ldots+t_m)}{t_m}}.
\end{equation}

Recall that $X_l$ is equal to the number of loops of length $l$ and that all the $X_l$ are independent under $\Ptt$. We then have that
    \begin{eqnarray}
        \Ptt\rk{A}=\prod_{l=j_m}^N\Ptt\rk{X_l=i_l}\,\qquad\mbox{where }i_l=\#\{k\colon j_k=l\}\in\{0,1\} \mbox{ for all }l.
    \end{eqnarray}

Similarly, for $k\ge J$, where $J=\sum_{i=1}^m j_i$,
    \begin{equation}\label{goal}
        \Ptt\rk{A\mid\Nl=k}=\frac{\Ptt\rk{\sum_{i=1+T_N}^{j_m-1}iX_i=k-J}}{\Ptt\rk{\sum_{i=1+T_N}^{N}iX_i=k}}\prod_{l=j_m}^N\Ptt\rk{X_l=i_l}\, .
    \end{equation}
Note that $i_l=0$ if $l\notin\{j_1,\dots,j_m\}$ and $=1$ otherwise.    Using the approximation $t_{j,a}\sim \ex^{-\beta a j \l_1(w/a)}\to 0$ (see Lemma~\ref{lem-tasy}(3)) for $j\in\{j_1,\dots,j_m\}$, we get that
    \begin{equation}
    \begin{aligned}
     \prod_{l=j_m}^N\Ptt\rk{X_l=i_l}&=
        \prod_{l=j_m}^N\ex^{-t_{l,a}}\frac{(t_{l,a})^{i_l}}{i_l!l^{i_l}}\sim \exp\rk{-\beta \sum_{l=j_m}^N i_l l a \l_1(w/a)}\prod_{l=j_m}^N\frac{1}{i_l!l^{i_l}}= \ex^{-\beta a \l_1(w/a) J}\prod_{i=1}^m\frac{1}{j_i}\\
        &\sim \ex^{-\beta a \l_1(w/a) J}\big(N(1-{\smfrac{\rho_w}\chi})\big)^{-m}\prod_{i=1}^m\frac{1}{t_i}.
  \end{aligned}
    \end{equation}
 Now pick $k=k_N\sim N (1-\rho_w/\chi)$, we obtain by Lemma~\ref{lem:2.7}
    \begin{eqnarray}
        \Ptt\rk{\sum_{i=1+T_N}^{N}jX_j=k_N}\sim \frac{\ex^{-\gamma}}{T_N}\ex^{-\beta a\l_1(w/a) k_N}\, ,
    \end{eqnarray}
    as well as (observe that $(k_N-J)/j_m\to (1-(t_1+\dots+t_m))/t_m$ as $N\to\infty$)
    \begin{eqnarray}
        \Ptt\rk{\sum_{i=T_N}^{j_m-1}jX_j=k_N-J}=\frac{q\rk{(1-(t_1+\dots+t_m))/t_m }}{T_N}\ex^{-\beta a\l_1(w/a) (k_N-J)}\, .
    \end{eqnarray}
Substituting the last three displays in \eqref{goal} implies \eqref{PDproofgoal}, and we finish the proof.
\end{proof}

\subsection{Proof of convergence of \texorpdfstring{$\frac 1N(iX_i)_{i\in\N}$}{}}\label{sec-microconvProof}
In this section, we prove Theorem~\ref{thm-ODLROlongloops}(1)(c), i.e., the convergence of the distribution of the microscopic loop lengths. Since we are considering the product topology, it suffices to consider just $\frac 1N i X_i$ for one fixed $i\in\N$. Recall that $u_\chi=0$. By Lemma~\ref{lem-tasy}(2), $X_i$ is Poisson-distributed with parameter $\frac 1i t_{i,a_N}\sim a_N^{-d/2} \frac 1i\chi \alpha^{\ssup\chi}_i\sim N\frac 1i \alpha^{\ssup\chi}_i$ as $N\to\infty$. For any $\eps>0$,
$$
\Ptt_{\beta a_N,w/a_N}^{\ssup N}\Big(\Big|\frac 1N i X_i -\alpha^{\ssup\chi}_i\Big|>\eps\,\Big|\,{\mathfrak N}=N\Big))
\leq \Ptt_{\beta a_N,w/a_N}^{\ssup N}\Big(\Big|\frac 1N i X_i -\alpha^{\ssup\chi}_i\Big|>\eps\Big) \,\frac1{\Ptt_{\beta a_N,w/a_N}^{\ssup N}({\mathfrak N}=N)}.
$$
Observe that $X_i$ is distributed as a sum of $N$ independent Poisson-distributed random variables with parameter $\frac 1i \alpha^{\ssup\chi}_i(1+o(1))$. Use a standard exponential concentration inequality based on Cram\'er's theorem from the theory of large deviations, we conclude that the first term on the right-hand side vanishes exponentially small on the scale $N$. On the other side, we use the lower bound of Lemma~\ref{lem-lowbounddenom} and the asymptotics from Lemma~\ref{lem_spectralscaling} to see that the denominator vanishes exponentially fast on the scale $N a_N \lambda_1(w/a_N)\asymp N a_N^{\alpha/(\alpha+2)}\ll N$. 	Hence, the right-hand side decays exponentially fast on the scale $N$.

\section{Proof of Theorem~\ref{thm-ODLROlongloops}(2): sub-critical regime}\label{sec-proofthm(2)}

\noindent Abbreviate $\chi_N=N a_N^{d/2}$. In this section we are under the assumption that $\chi=\lim_{N\to\infty}\chi_N$ exists in $ [0,\rho_w)$, and we prove Theorem~\ref{thm-ODLROlongloops}(2)(a) and (b). (The proof of (c) is deferred to Section~\ref{sec-freeenergy}.)

Abbreviate $a=a_N$ and ${\tt P}={\tt P}_{\beta a, w/a}^{\ssup N}$. Since
\begin{equation}
   \frac 1N \Ett\ek{\Nrd}\sim\frac 1N\rho_w a^{-2/d}\to\frac{\rho_w}{\chi_N}> 1\, ,
\end{equation}
the event $\{{\mathfrak N}=N\}$ is a downwards deviation under ${\tt P}$. We tilt the intensity measure of ${\tt P}$ with a small factor by means of a \textit{chemical potential}, which suppresses long loops, such that the expected number of particles in the process is equal to $N$.  For $\mu\in(-\infty,0)$, denote by $\Ptt_{\beta,w, \mu}^{\ssup N}$ the probability measure for the PPP with intensity measure
\begin{equation}
    \nu_{\beta, w,\mu}^{\ssup N}(\d f)=\sum_{k=1}^N \frac{\ex^{\beta \mu k}}{k} \BB^{\ssup{k \beta, w}}(\d f),\qquad \mbox{on }\bigcup_{k\in\N}\Ccal_{\beta k}.
\end{equation}
Abbreviate  ${\tt P}_\mu={\tt P}_{\beta a, w/a,\mu}^{\ssup N}$. Under ${\tt P}_\mu$, the vector $(X_j)_{j\in[N]}$ consists of independent Poisson-distributed variables $X_j$ with parameters $\frac 1j t_{j,a}^{\ssup\mu}= \frac 1j\ex^{\beta \mu a j}t_{j,a}$.
 Observe that 
\begin{align}\label{changeofmeas}
\Ptt(\cdot\mid{\mathfrak N}=N)
=\Ptt_\mu(\cdot\mid {\mathfrak N}=N),\qquad N\in\N, \mu\in(-\infty,0),
\end{align}
since a simple change of measure shows that
\begin{equation}\label{change}
    \Ptt\rk{\Nrd=m}=\ex^{p_{a,N }(\mu)-p_{a,N}(0)-\beta \mu a m}\Ptt_{\mu}\rk{\Nrd=m}\, ,\qquad m\in\N,
\end{equation}
where we abbreviated
\begin{equation}\label{paNdef}
 p_{a,N}(\mu)=\nu_{\beta a, w/a,\mu}^{\ssup N}\Big(\bigcup_{j=1}^N\Ccal_{\beta j}\Big)= \sum_{j=1}^N\frac{\ex^{\beta \mu a j}}{j} t_{j,a}.
\end{equation}
Now we define $\mu_N\in(-\infty,0)$ by ${\tt E}_{\mu_N}[{\mathfrak N}]=N$. Recall the pressure $p$ from \eqref{rhowdef} and \eqref{pressuredef} and that $u_\chi\in(-\infty,0)$ is defined by $ p'(u_\chi)=\beta\chi$.

\begin{lemma}\label{lem:b}
$$
\lim_{N\to\infty}\mu_Na_N=\begin{cases}
u_\chi,&\mbox{if }\chi>0,\\
-\infty,&\mbox{if }\chi=0,
\end{cases}.
$$ 
In the case $\chi=0$, we have the more precise asymptotics $\mu_Na_N\sim \frac{1}{\beta}\log\rk{\chi_N(4\pi\beta)^{d/2}\Wj_1^{-1}}+o(1)$.
\end{lemma}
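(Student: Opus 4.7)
The plan is to rephrase the defining equation $\Ett_{\mu_N}[\Nrd] = N$ after the natural rescaling $v_N := \mu_N a_N$. Multiplying by $a_N^{d/2}/N$ and recalling $\chi_N = Na_N^{d/2}$, we obtain
\[
\chi_N = a_N^{d/2}\sum_{j=1}^N \ex^{\beta v_N j}\, t_{j,a_N} =: F_N(v_N),
\]
where $F_N$ is continuous and strictly increasing in $v$, with $F_N(-\infty)=0$ and, by Corollary~\ref{cor:N_short}, $F_N(0^-)=a_N^{d/2}\Ett_0[\Nrd]\to \rho_w$. Since $\chi_N\to\chi<\rho_w$, there is a unique $v_N\in(-\infty,0)$ solving $F_N(v_N)=\chi_N$ for all large $N$; note also that $a_N\to 0$, since $\chi<\rho_w<\infty$.

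The key intermediate step is pointwise convergence $F_N(v)\to F(v):= \sum_{j\ge 1}\ex^{\beta v j}\Wj_j/(4\pi\beta j)^{d/2}=p'(v)/\beta$ for $v\in(-\infty,0)$. For each fixed $j$, since $j a_N^{\alpha/(\alpha+2)}\to 0$, Lemma~\ref{lem-tasy}(2) gives $a_N^{d/2}t_{j,a_N}\to \Wj_j/(4\pi\beta j)^{d/2}$. Meanwhile Lemma~\ref{lem-tasy}(1) yields the uniform bound $a_N^{d/2}t_{j,a_N}\le C j^{-d/2-d/\alpha}$, so the summands are termwise dominated by $C\ex^{\beta v j}j^{-d/2-d/\alpha}$, which is summable for $v<0$ (and in particular summable in $d\ge 3$). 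Dominated convergence then gives $F_N(v)\to F(v)$, and monotonicity upgrades this to locally uniform convergence on $(-\infty,0)$.

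For $\chi\in(0,\rho_w)$ I apply a sandwich: for any $\delta>0$, strict monotonicity of $F$ gives $F(u_\chi-\delta)<\chi<F(u_\chi+\delta)$, and pointwise convergence together with $\chi_N\to\chi$ forces $F_N(u_\chi-\delta)<\chi_N<F_N(u_\chi+\delta)$ for large $N$. Monotonicity of $F_N$ then yields $v_N\in(u_\chi-\delta,u_\chi+\delta)$, so $\mu_N a_N = v_N\to u_\chi$. For $\chi=0$, the same sandwich on any fixed $v^*<0$ gives $F_N(v^*)\to F(v^*)>0=\chi$, forcing $v_N<v^*$ eventually, i.e.\ $v_N\to-\infty$.

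The refined asymptotic in the $\chi=0$ case comes from isolating the leading $j=1$ term:
\[
\chi_N = \ex^{\beta v_N}\,a_N^{d/2}t_{1,a_N} + \sum_{j=2}^N \ex^{\beta v_N j}\,a_N^{d/2}t_{j,a_N}.
\]
By Lemma~\ref{lem-tasy}(2), the first summand equals $\ex^{\beta v_N}\Wj_1/(4\pi\beta)^{d/2}\,(1+o(1))$. In the tail, the estimate $\ex^{\beta v_N j}\le \ex^{2\beta v_N}$ for $j\ge 2$ (since $v_N<0$) combined with $a_N^{d/2}\sum_{j\ge 2}t_{j,a_N}\le a_N^{d/2}\Ett_0[\Nrd]\to\rho_w$ gives a tail of size $O(\ex^{2\beta v_N})=o(\ex^{\beta v_N})$ since $v_N\to -\infty$. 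Hence $\chi_N = \ex^{\beta v_N}\Wj_1/(4\pi\beta)^{d/2}(1+o(1))$, and taking logarithms yields $\mu_N a_N=\tfrac{1}{\beta}\log(\chi_N(4\pi\beta)^{d/2}/\Wj_1)+o(1)$. The only mild obstacle throughout is the dominated-convergence input from Lemma~\ref{lem-tasy}(1); everything else is a routine monotone-inverse function argument.
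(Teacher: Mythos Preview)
Your proof is correct and follows essentially the same strategy as the paper: both identify the rescaled function $F_N(v)=a_N^{d/2}\sum_{j=1}^N \ex^{\beta v j}t_{j,a_N}$, use Lemma~\ref{lem-tasy}(1)--(2) to show $F_N(v)\to p'(v)/\beta$ via dominated convergence, and then invert using monotonicity. The paper argues first that $(\mu_N a_N)_N$ is bounded away from zero by a contradiction argument and then reads off $p'(\mu_N a_N)/\beta\to\chi$ directly, whereas you package the inversion as an explicit sandwich $F_N(u_\chi-\delta)<\chi_N<F_N(u_\chi+\delta)$; this is slightly more transparent but not a genuinely different route. Your treatment of the $\chi=0$ refinement (isolating the $j=1$ term and bounding the tail by $O(\ex^{2\beta v_N})$) is likewise the same as the paper's.
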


\begin{proof}
Note that 
$$
\mathtt{E}_\mu[{\mathfrak N}]=\sum_{j=1}^N \ex^{\beta \mu a j}t_{j,a}.
$$
Since this is equal to $N$ for $\mu=\mu_N$, we see that  $(\mu_N a_N)_{N\in\N}$ is bounded away from zero. Indeed, if $\mu_N a_N$ would go to zero, then we would have, for any $R\in\N$, using Lemma \ref{lem-tasy}(2)
$$
N\geq\sum_{j=1}^R \ex^{\beta \mu_N a_N j}t_{j,a}
\geq (1-o(1))\sum_{j=1}^R a^{-d/2} (4\pi\beta j)^{-d/2}\int \ex^{-\beta j w}\d x\sim \frac N\chi \sum_{j=1}^R (4\pi\beta j)^{-d/2}\int \ex^{-\beta j w}\d x,
$$
and the right-hand side is asymptotic to $N\rho_w/\chi$ in the limit $N\to\infty$, followed by $R\to\infty$, which produces a contradiction with $\chi<\rho_w$. Using Lemma \ref{lem-tasy}(1) and (2) and the fact that $d\geq 3$, we see that 
$$
1=\frac 1N \mathtt{E}_{\mu_N}[{\mathfrak N}]\sim \frac 1N a_N^{-d/2} p(\mu_N a_N)
\sim \frac{1}\chi  p(\mu_N a_N), \qquad N\to\infty.
$$
This concludes the proof for $\chi>0$, since the range of $p$ contains $(0,\rho_w]$.

In the case $\chi=0$, note that
\begin{equation}
p(u)=\ex^{\beta u}(4\pi \beta)^{-d/2}\Wj_1+\Ocal\rk{\ex^{2\beta u}}\, , \qquad u\to-\infty.
\end{equation}
Hence, 
\begin{equation}
\mu_Na_N\sim \frac{1}{\beta}\log\rk{\chi_N(4\pi \beta)^{d/2}\Wj_1^{-1}}
\sim\frac{1}{\beta}\log \chi_N, \qquad N\to\infty.
\end{equation}

%
%
%
%

\end{proof}

Write ${\mathfrak N}^{\ssup j}=j X_j$ for the total number of particles in loops of length $j$ and ${\mathfrak N}^{\ssup{\ge j}}=\sum_{k=j}^\infty k X_k$ for the number of all particles in loops of lengths $\geq j$.



\begin{lemma}\label{lem:E&Var}
\begin{enumerate}
\item If $\chi>0$, we have that
\begin{equation*}
\Var_{\mu_N}\ek{\Nrd}\sim \frac{N}{\chi}\,\frac{p''(u_\chi)}{\beta^2}, \qquad N\to\infty.
\end{equation*}
\item If $\chi_N\to 0$, there is a $C\in(0,\infty)$ such that for any $R,N\in\N $ with $R\leq N$,
\begin{eqnarray}
\Ett_{\mu_N}[{\mathfrak N}^{\ssup{1}}]&\sim& N,\nonumber\\
\Ett_{\mu_N}[{\mathfrak N}^{\ssup{\geq R}}]&\leq& C N \chi_N^{R-1}\label{ENResti},\\
 {\tt Var}_{\mu_N}[{\mathfrak N}^{\ssup{\geq 2}}]&\leq&C N \chi_N^{\frac 12}.\label{eq:geR}
\end{eqnarray}
\end{enumerate}
\end{lemma}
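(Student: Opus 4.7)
My plan hinges on the fact that under $\Ptt_{\mu_N}$ the variables $\Nrd^{\ssup j}=jX_j$ are independent: since $X_j$ is Poisson with parameter $\ex^{\beta\mu_N a_N j}t_{j,a_N}/j$,
$$
\Ett_{\mu_N}[\Nrd^{\ssup j}]=\ex^{\beta\mu_N a_N j}\,t_{j,a_N},\qquad \Var_{\mu_N}[\Nrd^{\ssup j}]=j\,\ex^{\beta\mu_N a_N j}\,t_{j,a_N}.
$$
Both parts of the lemma therefore reduce to estimating $S_k(R,R')=\sum_{R\le j\le R'} j^k\ex^{\beta\mu_N a_N j}t_{j,a_N}$ for $k\in\{0,1\}$. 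The general strategy is to split each sum at a fixed truncation $R$, extract the precise small-$j$ asymptotics from Lemma~\ref{lem-tasy}(2), and control the large-$j$ tail by the crude bound of Lemma~\ref{lem-tasy}(1) multiplied by the exponentially decaying factor $\ex^{\beta\mu_N a_N j}$ provided by Lemma~\ref{lem:b}.

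For (1), I would first note that absolute convergence of the defining series gives $p''(u_\chi)/\beta^2=\sum_{j\in\N}j\,\ex^{\beta u_\chi j}\Wj_j(4\pi\beta j)^{-d/2}$. Since $\mu_N a_N\to u_\chi<0$ by Lemma~\ref{lem:b}, applying Lemma~\ref{lem-tasy}(2) uniformly in $j\le R$ as $a_N\to 0$ yields
$$
S_1(1,R)\,\sim\,a_N^{-d/2}\sum_{j\le R}j\,\ex^{\beta u_\chi j}\frac{\Wj_j}{(4\pi\beta j)^{d/2}}\,\sim\,\frac{N}{\chi}\cdot\frac{1}{\beta^2}\sum_{j\le R}j\,\ex^{\beta u_\chi j}\frac{\Wj_j}{(4\pi\beta j)^{d/2}}.
$$
For the tail I would use $t_{j,a_N}\le C a_N^{-d/2} j^{-d/2-d/\alpha}$ from Lemma~\ref{lem-tasy}(1) together with $\mu_N a_N\le u_\chi/2$ for all large $N$, bounding
$$
S_1(R+1,N)\le C a_N^{-d/2}\sum_{j>R}j^{\,1-d/2-d/\alpha}\ex^{\beta u_\chi j/2}=\frac{N}{\chi}\cdot o_R(1),
$$
uniformly in $N$, where the exponential factor makes the remainder go to zero with $R$. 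Letting first $N\to\infty$ and then $R\to\infty$ gives the stated asymptotic.

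For (2)(a), Lemma~\ref{lem:b} in the $\chi_N\to0$ regime yields $\ex^{\beta\mu_N a_N}\sim\chi_N(4\pi\beta)^{d/2}/\Wj_1$, which combined with Lemma~\ref{lem-tasy}(2) at $j=1$ gives $\Ett_{\mu_N}[\Nrd^{\ssup 1}]=\ex^{\beta\mu_N a_N}t_{1,a_N}\sim\chi_N a_N^{-d/2}=N$. For (2)(b) and (2)(c), the decisive ingredient is that $\ex^{\beta\mu_N a_N}\le 2c\chi_N$ eventually (with $c=(4\pi\beta)^{d/2}/\Wj_1$), so Lemma~\ref{lem-tasy}(1) and a geometric-series estimate give
$$
\Ett_{\mu_N}[\Nrd^{\ssup{\ge R}}]\le C a_N^{-d/2}\sum_{j\ge R}(2c\chi_N)^j j^{-d/2-d/\alpha}\le C a_N^{-d/2}\chi_N^R=CN\chi_N^{R-1},
$$
with $C$ independent of $R$. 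The identical argument applied to $S_1(2,N)$ yields $\Var_{\mu_N}[\Nrd^{\ssup{\ge2}}]\le CN\chi_N\le CN\chi_N^{1/2}$ since $\chi_N\le1$ eventually.

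The main technical obstacle is the uniform control of the tail in (1): the truncation error must be small in $R$ but independent of $N$. This is precisely where the hypothesis $\chi<\rho_w$ enters quantitatively, as it keeps $\mu_N a_N$ bounded strictly below zero and supplies the exponential factor that tames the polynomial growth from $j^{1-d/2-d/\alpha}$; the analogous step for (2) is automatic since the geometric factor $\chi_N^j$ dominates any polynomial prefactor.
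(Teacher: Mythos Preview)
Your argument is correct and follows the same route as the paper's proof: both reduce to estimating $\sum_j j^k\ex^{\beta\mu_N a_N j}t_{j,a_N}$ via Lemma~\ref{lem-tasy} for $t_{j,a_N}$ and Lemma~\ref{lem:b} for $\mu_N a_N$. Your treatment of (1) is more explicit about the truncation-at-$R$ step (the paper simply asserts the termwise limit), and in (2)(c) you obtain the sharper bound $CN\chi_N$ directly from the geometric series, whereas the paper factors $\ex^{\beta\mu_N a_N j}\le\ex^{\frac32\beta\mu_N a_N}\ex^{\frac12\beta\mu_N a_N j}$ to arrive at $CN\chi_N^{1/2}$.
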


\begin{proof} (1) Note that $p''(u)/\beta^2=\sum_{j\in\N}\ex^{\beta u j}(4\pi\beta j)^{-d/2}\Wj_j$, since $p''$ is continuous in $(-\infty,0)$. Since $(\mu_N a_N)_{N\in\N}$ is bounded away from zero, we can use for any $j\in [N ]$ the asymptotics $t_{j,a}\sim (4\pi \beta)^{-d/2}\Wj_j N/\chi_N$ in the following sum:
\begin{equation}
\Var_{\mu_N}[{\mathfrak N}]=\sum_{j=1}^N \ex^{\beta \mu_N a_N j}jt_{j,a_N}\sim \frac{N}{\chi_N}\frac{p''(\mu_N a_N)}{\beta^2}\sim \frac{N}{\chi}\frac{p''(u_\chi)}{\beta^2}\,.
\end{equation}

(2)  By Lemma \ref{lem-tasy}(2) and Lemma \ref{lem:b},
\[
\Ett_{\mu_N}[{\mathfrak N}^{\ssup{1}}]=\ex^{\beta\mu_N a_N}t_{1,a_N}\sim \ex^{\beta\mu_N a_N}a_N^{-d/2}\sim N.
\]

We use $C\in(0,\infty)$ to denote a generic constant that does not depend on $a$ nor on $N$ and may change its value at each appearance.
By Lemma \ref{lem-tasy}(1) and Lemma \ref{lem:b} again,
\begin{align*}
{\tt E}_{\mu_N}[{\mathfrak N}^{\ssup{\ge R}}]=\sum_{j=R}^N \ex^{\beta \mu_N a_N j}t_{j,a_N}
&\leq  C  a_N^{-d/2}\ex^{\beta \mu_N a_N R}\sum_{j=R}^N \ex^{\beta \mu_N a_N(j-R)} j^{-d/2}\Wj_j\\
&\leq C\frac N {\chi_N} \,\chi_N^{R(1+o(1))}\sum_{j=R}^N  j^{-d/2}\Wj_j\leq C N  \chi_N^{(R-1)(1+o(1)}.
\end{align*}
Finally, Lemma \ref{lem:b} implies that
\begin{equation}
\begin{aligned}
    {\tt Var}_{\mu_N}[{\mathfrak N}^{\ssup{\geq 2}}]&=\sum_{j=2}^N j\ex^{\beta \mu_N a_N j}t_{j,a_N} 
    \leq C a_N^{-d/2}\sum_{j=2}^Nj^{1-d/2}\ex^{\beta \mu_N a j}\Wj_j\\
& 
    \leq C a_N^{-d/2} \ex^{\frac 32 \beta \mu_N a_N}\sum_{j=1}^Nj^{1-d/2}\ex^{\frac 12 \beta \mu_N a_N j}\Wj_j   \leq C N \ex^{\frac 12\beta \mu_N a_N}=CN\chi_N^{\frac 12}\, .
\end{aligned}
\end{equation}

\end{proof}

\begin{lemma}\label{lem:almost_gaussian} There is $C\in(0,\infty)$ such that, for any $N\in\N$ and any $r=r_N\in\N_0$ such that $r\leq O(\sqrt N)$,
\begin{align}
C^{-1} N^{-\frac 1 2}\le \Ptt_{\mu_N}({\mathfrak N}=N-r)\leq C N^{-\frac 1 2}.
\end{align}
\end{lemma}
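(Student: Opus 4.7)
The estimate is a local central limit theorem (LCLT) on the $\sqrt N$-scale: by construction of $\mu_N$, $\Ett_{\mu_N}[\Nrd]=N$, so we are asking that the probability mass at points within $O(\sqrt N)$ of the mean be of Gaussian order $N^{-1/2}$. The variance has order $N$ in both regimes, but for different reasons. When $\chi>0$, Lemma~\ref{lem:E&Var}(1) gives $\sigma_N^2:=\Var_{\mu_N}[\Nrd]\asymp N$, coming from all loop lengths $j$ weighted by $\widetilde t_j:=\ex^{\beta\mu_N a_N j}t_{j,a_N}$, which decays exponentially in $j$ because $\mu_N a_N\to u_\chi<0$ by Lemma~\ref{lem:b}. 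When $\chi=0$, essentially all the mass lives on $X_1$: Lemma~\ref{lem:E&Var}(2) yields $\Ett_{\mu_N}[X_1]\sim N$, while $\Ett_{\mu_N}[\Nrd^{\ssup{\geq 2}}]=O(N\chi_N)$ and $\Var_{\mu_N}[\Nrd^{\ssup{\geq 2}}]=O(N\chi_N^{1/2})$. The lattice span is one in both cases, since $X_1$ takes arbitrary values in $\N_0$.

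\textbf{Case $\chi>0$.} Run a Fourier-based LCLT. By Poisson independence,
\begin{equation*}
\Ptt_{\mu_N}(\Nrd=N-r)=\frac{1}{2\pi}\int_{-\pi}^{\pi}\ex^{-i(N-r)\theta}\Phi_N(\theta)\,\d\theta,\qquad \Phi_N(\theta)=\exp\Bigl(\sum_{j=1}^N\frac{\widetilde t_j}{j}(\ex^{ij\theta}-1)\Bigr).
\end{equation*}
Split at $|\theta|=N^{-1/3-\e}$ for a small $\e>0$. On the central piece, Taylor expansion of $\ex^{ij\theta}-1$ together with $\sum_j\widetilde t_j=N$, $\sum_j j\widetilde t_j=\sigma_N^2\asymp N$, and $\sum_j j^2\widetilde t_j=O(N)$ (the last from Lemma~\ref{lem-tasy}(1) combined with the exponential cutoff in $j$) yields $\log\Phi_N(\theta)=iN\theta-\sigma_N^2\theta^2/2+o(1)$. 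Substituting $\theta=s/\sigma_N$ produces a Gaussian integral of size $\asymp\sigma_N^{-1}\asymp N^{-1/2}$ uniformly in $r=O(\sqrt N)$. On the tail $|\theta|>N^{-1/3-\e}$, keep only the $j=1$ term in the real part (the others are non-positive) to bound $|\Phi_N(\theta)|\leq \ex^{\widetilde t_1(\cos\theta-1)}\leq \ex^{-cN^{1/3-2\e}}$, using $\widetilde t_1\asymp N$; this tail is negligible.

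\textbf{Case $\chi=0$.} Decompose $\Nrd=X_1+\Nrd^{\ssup{\geq 2}}$ into independent summands and condition on the second:
\begin{equation*}
\Ptt_{\mu_N}(\Nrd=N-r)=\sum_{k=0}^{N-r}\Ptt_{\mu_N}(X_1=N-r-k)\,\Ptt_{\mu_N}(\Nrd^{\ssup{\geq 2}}=k).
\end{equation*}
By Chebyshev and Lemma~\ref{lem:E&Var}(2), with $K_N:=\sqrt{N\chi_N^{1/4}}=o(\sqrt N)$, the event $\{|\Nrd^{\ssup{\geq 2}}-\Ett_{\mu_N}[\Nrd^{\ssup{\geq 2}}]|>K_N\}$ has probability $O(\chi_N^{1/4})=o(1)$. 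For $k$ in the concentration interval, $|N-r-k-\Ett_{\mu_N}[X_1]|=O(\sqrt N+K_N)=O(\sqrt N)$, and since $X_1$ is Poisson with mean $\sim N$, a Stirling-based Poisson LCLT gives $\Ptt_{\mu_N}(X_1=N-r-k)\asymp N^{-1/2}$. The upper bound follows from the universal Poisson mode estimate $\Ptt_{\mu_N}(X_1=m)\leq CN^{-1/2}$ summed against the probability measure in $k$; the lower bound from restricting the sum to the concentration interval.

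\textbf{Main obstacle.} The most delicate piece is the Fourier analysis for $\chi>0$, where the $N$-dependence of $\widetilde t_j$ through $\mu_N a_N\to u_\chi$ must be tracked uniformly in $j$. The structural payoff of the chemical-potential tilting in \eqref{changeofmeas} is the exponential cutoff $\ex^{\beta\mu_N a_N j}$, which renders every moment $\sum_j j^k\widetilde t_j$ comparable to $N$; this is what enables both the cubic-remainder bound in the Taylor expansion and the $j=1$ tail estimate $\widetilde t_1\asymp N$.
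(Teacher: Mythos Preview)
Your proposal is correct. For the case $\chi=0$, your decomposition $\Nrd=X_1+\Nrd^{\ssup{\geq 2}}$, the Chebyshev concentration for $\Nrd^{\ssup{\geq 2}}$, and the Stirling-based Poisson estimate for $X_1$ match the paper's proof almost step for step.

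For $\chi>0$ the two arguments diverge in presentation but not in spirit. The paper contents itself with one sentence: since $\Nrd$ has mean $N$ and variance $\asymp N$ (Lemma~\ref{lem:E&Var}(1)), the result ``follows from the local central limit theorem.'' You instead write out a self-contained Fourier proof, splitting the inversion integral at $|\theta|\asymp N^{-1/3-\eps}$, Taylor-expanding on the central window using the moment bounds $\sum_j j^k\widetilde t_j\asymp N$ for $k=0,1,2$, and killing the tail via $|\Phi_N(\theta)|\leq\ex^{\widetilde t_1(\cos\theta-1)}$ with $\widetilde t_1\asymp N$. This is more work but also more honest: the summands $jX_j$ form a triangular array of non-identically-distributed Poisson variables whose parameters depend on $N$ through $\mu_N a_N$, so the paper's invocation of an off-the-shelf LCLT is somewhat informal, whereas your argument makes explicit exactly which moment and decay estimates are being used. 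One small refinement: when controlling the cubic remainder in the central window, integrate $|\theta|^3$ against the Gaussian weight $\ex^{-\sigma_N^2\theta^2/2}$ rather than against Lebesgue measure alone, so that the error is $O(N\sigma_N^{-4})=O(N^{-1})=o(N^{-1/2})$; otherwise the raw bound $N\cdot(N^{-1/3-\eps})^4=N^{-1/3-4\eps}$ is not quite small enough.
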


\begin{proof}
For the case $\chi>0$, this follows from the variance bound proven in Lemma \ref{lem:E&Var}. Indeed, ${\mathfrak N}$ is the sum of $N$-independent random variables with mean $N$ and variance $\Ocal(N)$, so the result follows from the local central limit theorem.
Below we consider $\chi=0$, which requires more approximations.

We first prove the lower bound. Let $s(N):=2\sqrt{{\tt{Var}}_{\mu_N}({\mathfrak N}^{\ssup{\geq 2}})}$. Recall that by Lemma \ref{lem:b}, ${\mathfrak N}^{\ssup{1}}$ and
${\mathfrak N}^{\ssup{\geq 2}}$ are independent and that ${\mathfrak N}^{\ssup{1}}$ has the Poisson distribution 
\[
\mathtt P_{\mu_N}({\mathfrak N}=k)={\rm Poi}_{\alpha}(k):=\frac{\ex^{-\alpha}\alpha^k}{k!},\qquad k\in\N_0,
\] 
where $\alpha:=\mathtt E_{\mu}[{\mathfrak N}^{\ssup{1}}]\sim N$, and $\mathtt E_\mu[{\mathfrak N}^{\ssup{\ge 2}}]=N-\alpha$. For $r=\Ocal(\sqrt{N})$, expand
\begin{equation}\label{eq:expand_N-r}
\begin{aligned}
\Ptt_{\mu_N}({\mathfrak N}=N-r)&=
\sum_{k\in\mathbb Z-\alpha} \Ptt_{\mu_N}({\mathfrak N}^{\ssup{1}}=\alpha+k-r)\Ptt_ {\mu_N}({\mathfrak N}^{\ssup{\geq 2}}=N-\alpha-k)\\
&=\sum_{k\in\mathbb Z-\alpha} {\rm Poi}_{\alpha}(\alpha+k-r)\Ptt_ {\mu_N}({\mathfrak N}^{\ssup{\geq 2}}=N-\alpha-k)\\
&\ge \Ptt_{\mu_N}\big(|{\mathfrak N}^{\ssup{\geq 2}}-\mathbb E_{\mu_N}[{\mathfrak N}^{\ssup{\geq 2}}]|\le s(N)\big)\min_{|k|\le s(N),k\in\Z-\alpha}{\rm Poi}_{\alpha}(\alpha+k-r).
\end{aligned}
\end{equation}
Using Stirling's formula in the form $n!\leq C (\frac n\ex)^n\sqrt n$, we estimate for $l=k-r$
\begin{equation*}
\begin{aligned}
{\rm Poi}_\alpha(\alpha+l)
&=\ex^{-\alpha}\frac{\alpha^{\alpha+l}}{(\alpha +l)!}
\geq C\ex^{-\alpha}\ex^{\alpha +l}\Big(\frac\alpha{\alpha+l}\Big)^{\alpha+l}(\alpha+l)^{-1/2}
\geq C \ex^{l} \ex^{-\frac l\alpha(\alpha+l)}N^{-1/2}\\
&\geq  C\ex^{-l^2/\alpha}N^{-1/2}\ge C N^{-1/2},
\end{aligned}
\end{equation*}
since $s(N)\leq \Ocal(\sqrt N)$ by Lemma~\ref{lem:E&Var}.

Finally, by Chebyshev's inequality,
\begin{align}
\Ptt_{\mu_N}\big(|{\mathfrak N}^{\ssup{\geq 2}}-\mathbb E_{\mu_N}[{\mathfrak N}^{\ssup{\geq 2}}]|\le s(N)\big)
\ge 1-\frac{{\tt Var}_{\mu_N}({\mathfrak N}^{\ssup{\geq 2}})}{s(N)^2}=\frac 3 4,
\end{align}
and the claimed lower bound for $\Ptt_{\mu_N}({\mathfrak N}=N-r)$ follows.

For the upper bound, simply notice that by \eqref{eq:expand_N-r},
\begin{equation*}
\begin{aligned}
\Ptt_{\mu_N}({\mathfrak N}=N-r)
&=\sum_{k\in\mathbb Z-\alpha} {\rm Poi}_{\alpha}(\alpha+k-r)\Ptt_ {\mu_N}({\mathfrak N}^{\ssup{\geq 2}}=N-\alpha-k)\\
&\le\sup_{k\in\mathbb N} {\rm Poi}_{\alpha}(k)={\rm Poi}_{\alpha}([\alpha])\le CN^{-1/2}.
\end{aligned}
\end{equation*}
\end{proof}

\begin{proof}[Proof of Theorem~\ref{thm-ODLROlongloops}(2)] Recall that we are in the case where $\chi=\lim_{N\to\infty}N a_N^{d/2}\in[0,\rho_w)$. Recall that $\mu_N\in(-\infty,0)$ is picked such that ${\tt E}_{\mu_N}[\mathfrak N]=N$. By Lemma \ref{Cor-PPPreprfree},
\begin{equation}
    \gamma^{\ssup {a_N}}_N(x,y)=\sum_{r=1}^N \ex^{\beta \mu_N a_N r} \BB_{x,y}^{\ssup{\beta a_N r,w/a_N}}(\Ccal_{\beta a_N r})
    \frac{{\tt P}_{\mu_N}(\Nrd=N-r)}
    { {\tt P}_{\mu_N}(\Nrd=N)}\, .
\end{equation}
We split the sum into the sums on $r\leq \sqrt N$, where we will use that the $\BB$-term is small for all distinct $x,y$, and $r>\sqrt N$, where we will use that the exponential term is small.
Using  Lemma~\ref{lem:almost_gaussian} for both the numerator and the denominator and using the simple bound $\BB_{x,y}^{\ssup {t,w}}\leq C t^{-d/2}\ex^{-|x-y|/4t}$, we obtain
\begin{equation}\label{subcaseproof1}
\begin{aligned}
    \sum_{1\leq r\leq \sqrt{N}} &\ex^{\beta \mu_N a_N r} \BB_{x,y}^{\ssup{\beta a_N r,w/a_N}}(\Ccal_{\beta a_N r})
    \frac{{\tt P}_{\mu_N}(\Nrd=N-r)}
    { {\tt P}_{\mu_N}(\Nrd=N)}
    \le C a_N^{-d/2}\sum_{1\le r\leq \sqrt{N}}r^{-d/2} \ex^{\beta \mu_N  a_N r} \ex^{-|x-y|^2/(4\beta a_N r)}.
\end{aligned}
\end{equation}
We use the comparison between geometric and arithmetic mean ($\frac{a+b}2\geq \sqrt {ab}$) to see that 
\[\ex^{\frac 12\beta \mu  a r} \ex^{-|x-y|^2/(4\beta ar)}
\le \ex^{-|x-y|\,(|\mu|/2)^{1/2}}.
\]
Since  $\mu_N a_N\to u_\chi<0$, respectively $\to-\infty$ for $\chi=0$, we find a $c\in(0,\infty)$ such that $|\mu_N|/2\geq c^2/a_N$ for all $N$. This implies that the sum on $r\leq \sqrt N$ is not larger than the right-hand side of \eqref{gammabound}.

In the remaining sum, we can use Lemma~\ref{lem:almost_gaussian} only for the denominator, but analogously we obtain in the same way
$$
\begin{aligned}
\sum_{ \sqrt{N}<r\leq N} \ex^{\beta \mu_N a_Nr}& \BB_{x,y}^{\ssup{\beta a_N r,w/a_N}}(\Ccal_{\beta a_N r})
    \frac{{\tt P}_{\mu_N}(\Nrd=N-r)}
    { {\tt P}_{\mu_N}(\Nrd=N)}
    \le C a_N^{-d/2}\sqrt N\sum_{\sqrt N< r\leq N}r^{-d/2}  \ex^{\frac 12\beta \mu  a_N r}\ex^{-|x-y|\,(|\mu_N|/2)^{1/2}}\\
    &\leq C a_N^{-d/2}\sqrt N\ex^{\frac 14\beta \mu_N  a_N \sqrt N} \ex^{-|x-y|\,(|\mu_N|/2)^{1/2}}
    \leq o(a^{-d/2})\ex^{-|x-y|\,(|\mu_N|/2)^{1/2}}.
    \end{aligned}
$$
Hence, this part is even smaller than the sum on $r\leq \sqrt N$, which finishes the proof of \eqref{gammabound}.

Now we  prove the weak convergence of $\frac 1N (i X_i)_{i\in\N}$ under ${\tt P}$ towards $\alpha=\alpha^{\ssup\chi}$ defined in \eqref{alphadef}. First we assume that $\chi>0$. Observe that $\alpha^{\ssup\chi}_j=\lim_{N\to\infty}{\tt E}_{\mu_N}(j X_j)$ for any $j\in\N$. Hence, also using \eqref{changeofmeas}, we see that, for any $\eps>0$ and all sufficiently large $N$,
$$
\begin{aligned}
{\tt P}&\Big(\big\|\smfrac 1N (jX_j)_{j\in\N}-\alpha^{\ssup\chi}\big\|_1>\eps\,\Big|\,{\mathfrak N}=N\Big)\\
&\leq{\tt P}_{\mu_N }\Big(\sum_{j=1}^N\big| j X_j-{\tt E}_{\mu_N}[jX_j]\big|>\frac \varepsilon 2 N\,\Big|\,{\mathfrak N}=N\Big)\\
&\leq C\sqrt N\frac 1{(\varepsilon N)^2}{\tt Var}_{\mu_N}\Big(\sum_{j=1}^N j X_j\Big)
\leq CN^{-3/2}{\tt Var}_{\mu_N}({\mathfrak N})\\
&\leq CN^{-1/2},
\end{aligned}
$$
where we used Lemma~\ref{lem:almost_gaussian} and the Chebychev inequality in the second step and Lemma~\ref{lem:E&Var}(1) in the final step.

Now we show the same assertion for the case $\chi=0$ with $ \alpha^{\ssup 0}=(1,0,0,\dots)$. For this, we show that ${\mathfrak N}^{\ssup 1}=X_1$ dominates the remaining particle number ${\mathfrak N}^{\ssup{\geq 2}}$, in the sense of
\begin{align}\label{eq:goal_small_a}
\Ptt_{\mu_N}({\mathfrak N}^{\ssup{\ge 2}}>\varepsilon N)\ll\Ptt_{\mu_N}({\mathfrak N}=N),\qquad N\to\infty,\quad \eps>0.
\end{align}
This will imply that 
\begin{equation}
    \Ptt_{\mu_N}\rk{{\mathfrak N}^{\ssup 1}\ge N(1-\e) \big|  {\mathfrak N}=N}= 1+o(1)\, ,
\end{equation}
i.e., almost all mass is in loops of length one, which implies the convergence of $\frac 1N (i X_i)_{i\in\N}$ towards $(1,0,0,\dots)$ under $\Ptt_{\mu_N}(\cdot|  {\mathfrak N}=N)$, and hence also under $\Ptt(\cdot|  {\mathfrak N}=N)$, due to \eqref{changeofmeas}.

We prove now \eqref{eq:goal_small_a}. Recall that we are in the case $\chi_N\to0$. For every fixed $\varepsilon>0$, by Chebyshev's inequality and \eqref{ENResti} for $R=2$, for large enough $N$,
\begin{align}
\Ptt_{\mu_N}({\mathfrak N}^{\ssup{\ge 2}}\ge\varepsilon N)
\le \frac{{\tt{Var}}_{\mu_N}({\mathfrak N}^{\ssup{\ge 2}})}{\rk{\varepsilon N-{\tt E}_{\mu_N}[{\mathfrak N}^{\ssup{\ge 2}}]}^2}
\le \frac 4{\varepsilon^2 N^2}{\tt{Var}}_{\mu_N}({\mathfrak N}^{\ssup{\ge 2}})
\leq o(\smfrac 1N),
\end{align}
where the last step follows from \eqref{eq:geR}.
This together with Lemma \ref{lem:almost_gaussian} proves \eqref{eq:goal_small_a}.
\end{proof}
\section{Identification of the free energy}\label{sec-freeenergy}

In this section, we prove the identification of the free energy in Theorem~\ref{thm-ODLROlongloops}(1)(d), respectively (2)(c).

 Recall $p_{a,N}(\mu)$ from \eqref{paNdef}. We abbreviate  $\Ptt=\Ptt_{\beta a_N,w/a_N}^{\ssup N}$ and $\Ptt_\mu=\Ptt_{\beta a_N,w/a_N,\mu}^{\ssup N}$. 
For any $N\in\N$  and $\mu\in(-\infty,0]$, we have from \eqref{change}
$$
Z_N(\beta,a_N,w)=\ex^{p_{a_N,N}(0)}\Ptt\rk{\Nrd=N}=\ex^{-\mu\beta a_N N+p_{a_N,N}(\mu)}\Ptt_{\mu}\rk{\Nrd=N}.
$$

    Assume first that $\chi>\rho_w$. In this case, set $\mu=0$. We then have that by Lemma \ref{lem-lowbounddenom} that
    \begin{equation}
        \Ptt\rk{\Nrd=N}=\exp\rk{-\l_1(W)\beta a_N^{\alpha/(2+\alpha)} N\rk{1-\smfrac{\rho_w}{\chi}} \rk{1+o(1)}}\, .
    \end{equation}
    Hence, we get that
    \begin{equation}
        {\rm f}_{\rm MF}(\beta,\chi)=\lim_{N\to\infty} \Big(-\frac{p_{a_N,N}(0)}{\beta N}+\lambda_1(W)a_N^{\alpha/(2+\alpha)}\rk{1-\smfrac{\rho_w}{\chi}}\Big)\, .
        \end{equation}
    Note that by the scaling, we have that
    \begin{equation}
    p_{a_N,N}(0)=a_N^{-d/2}p(0)\sim \frac{N}{\chi}p(0)\, ,
    \end{equation}
    and Theorem~\ref{thm-ODLROlongloops}(1)(d) follows.
    
    If $\chi<\rho_w$, we choose $\mu=\mu_N<0$ as in Lemma \ref{lem:b}. Lemma~\ref{lem:almost_gaussian} gives $\Ptt_{\mu}\rk{\Nrd=N}\asymp N^{-1/2}$. Hence, we can neglect this term and obtain
    \begin{equation}
     {\rm f}_{\rm MF}(\beta,\chi)=\lim_{N\to\infty} \Big(-\frac{p_{a_N,N}(a_N\mu_N)}{\beta N}+\mu_N a_N\Big)\, .
    \end{equation}
    In the case $\chi>0$, we again make the approximation $p_{a,N}(a_N\mu_N)\sim\frac{N}{\chi}p(a_N\mu_N)$, which implies  Theorem~\ref{thm-ODLROlongloops}(2)(c).
    
    If $\chi=0$, we approximate to first order 
    \begin{equation}
    p_{a,N}(a_N\mu_N)\sim\frac{N}{\chi_N}\frac{\ex^{\beta \mu_Na_N}}{(4\pi\beta)^{d/2}}\Wj_1\sim N\, ,
    \end{equation}
    which implies, via Lemma~\ref{lem:b} that $
    {\rm f}_{\rm MF}(\beta,0)=\lim_{N\to\infty}(\frac{1}{\beta}+\frac{\log(\chi_N)}{\beta^2})=-\infty.$

\bigskip
\noindent Tianyi Bai\\
Chinese Academy of Sciences\\
55 Zhongguancundong Rd, Haidian District, 100190 Beijing, China\\
{\tt tianyi.bai73@amss.ac.cn}
\medskip

\noindent Wolfgang König\\
Weierstrass Institute Berlin\\
Mohrenstraße 39, 10117 Berlin, Germany
\medskip

and
\medskip

\noindent TU Berlin, Institute for Mathematics\\
Straße des 17. Juni, 10623 Berlin, Germany\\
{\tt koenig@wias-berlin.de}\\
\medskip

\noindent Quirin Vogel\\
Ludwig Maximilian University of Munich, Mathematical Institute\\
Theresienstr. 39, 80333 München, Germany\\
{\tt quirinvogel@outlook.com}
\medskip

\end{document}